\numberwithin{equation}{section}
 \newcommand{\blue}[1]{\textcolor{black}{#1}}
\definecolor{light-gray}{gray}{0.95}
\definecolor{rodi}{rgb}{0,1,0}
\definecolor{orange}{rgb}{1,0.5,0}
\definecolor{LightCyan}{rgb}{0.88,1,1}
\definecolor{morado}{rgb}{0.44, 0.16, 0.39}
\newcommand\x{\times}
\newcommand{\dt}{\partial_t } 
\newcommand{\dx}{\partial_x }
\newcommand{\D}{\mathcal{D}}
\newcommand{\dd}{\partial} 
\newcommand{\veps}{\varepsilon}
\newcommand{\grad}{\operatorname{grad}}
\renewcommand{\div}{\operatorname{div}}
\newcommand{\bb}[1]{ \begin{bmatrix}#1\end{bmatrix}}
\newcommand{\RRR}{\mathbb{R}}
\newcommand{\wt}{\tilde{w}}
\newcommand{\dom}{\mathop{\mathrm{dom}}}
\newcommand{\V}{\mathcal{V}}
\newcommand{\lprp}[1]{\sideset{^\perp}{}{\mathop{#1}}}
\newcommand{\ip}[1]{\langle{#1}\rangle}
\newcommand{\om}{\Omega}
\newtheorem{theorem}{Theorem}[section]
\theoremstyle{definition}
\theoremstyle{remark}
\numberwithin{equation}{section}
\begin{document}


\title{A spacetime DPG method for \blue{the
      wave equation in multiple dimensions}}

\author{J.~Gopalakrishnan}
\address{Portland State University, Po Box 751, Portland, OR 97207-0751, USA.}
\email{gjay@pdx.edu}

\author{P. ~Sep\'ulveda}
\address{{\blue{Basque Center for Applied Mathematics, Mazarredo 14, Bilbao, E48009, Spain.}}}

\email{psepulveda@bcamath.org}

\thanks{This work was partly supported by AFOSR grant
  FA9550-17-1-0090. \blue{ Numerical studies were facilitated by the
    Portland Institute of Sciences (PICS) established under NSF grant
    DMS-1624776. Paulina Sep\'ulveda has received funding from the
    Spanish Ministry of Economy and Competitiveness with reference
    MTM2016-76329-R (AEI/FEDER, EU) and BCAM ``Severo Ochoa''
    accreditation of excellence SEV-2013-0323 and SEV-2017-0718.}}

\begin{abstract}
 A spacetime discontinuous Petrov-Galerkin (DPG) method for
  the linear wave equation is presented. \blue{This method is based on
    a weak formulation that uses a broken graph space. The
    wellposedness of this formulation is established using a
    previously presented abstract framework.} One of the main tasks in
  the verification of the conditions of this framework is proving a
  density result. This is done in detail for a simple domain in
  arbitrary dimensions. The DPG method based on the weak formulation
  is then studied theoretically and numerically.  Error estimates and
  numerical results are presented for triangular, rectangular,
  tetrahedral, and hexahedral meshes of the spacetime domain.
  \blue{The potential for using the built-in error estimator of the DPG
   method for an adaptivity mesh refinement strategy in two and three 
   dimensions is also presented.}
\end{abstract}

\maketitle

\section{Introduction}

This is a study on the feasibility of the discontinuous Petrov
Galerkin (DPG) method \cite{JG:Jay2012,JG:DemkoGopal13} for the
spacetime wave equation. We follow the approach laid out in our
earlier study of the DPG method for the spacetime Schr\"odinger
equation~\cite{JG:DGNS2017}.

Currently, the most widely used numerical techniques for transient
problems are time-stepping schemes (based on the method of lines
approach).  However, there has been increasing interest recently in
direct spacetime discretizations (where time is viewed as just another
coordinate).  Some reasons for investigating these approaches include
their potential for performing natural spacetime adaptivity,
possibility to obtain convergence even under limited spacetime
regularity, exploitation of parallelism without causality constraints,
and treatment of moving boundaries \blue{(see e.g.~\cite{JG:DGNS2017,
    JG:Neumu13, JG:NeumuellerVassilevskiVilla2017, JG:Stein15,
    JG:VoroninLeeNeumulerSepulvedaVassilevski18})}.  \blue{The
  analysis and implementation of 4D finite element discretizations is
  already underway~\cite{JG:GopalNeumuVassi18,
    JG:VoroninLeeNeumulerSepulvedaVassilevski18}, hence our interest
  in obtaining a wellposed formulation in arbitrary dimensions.}

Since the DPG method has a built-in error estimator and exhibits good
pre-asymptotic mesh-independent stability properties, it is natural to
consider its extension to spacetime problems.  Applications of the DPG
method for spacetime problems have already been computationally
studied in \cite{JG:EDCM2014} for the transient parabolic partial
differential equations and~\cite{JG:ECD2015} for the time-dependent
convection-diffusion equation. We also note that a scheme that
combines DPG spatial discretization with backward Euler time stepping
for the heat equation has been analyzed in~\cite{JG:FHG2017}.

In contrast to these works, here we consider the transient acoustic
wave system \blue{in arbitrary dimensions}.  One contribution of this
work is a proof of the wellposedness of the ultraweak DPG formulation
for the spacetime wave problem in a non-standard Hilbert space,
\blue{without developing a trace theory for this function space}. By
using the abstract theory developed in~\cite{JG:DGNS2017}, the proof
reduces to verification of some conditions. This verification proceeds
by proving a density result. \blue{The presented proof only applies
  for a multi-dimensional hyper-rectangle}.

We also present, both
practically and theoretically, how the built-in DPG error estimator is
useful for spacetime adaptive refinement \blue{ in two and three
  dimensions using conforming meshes of simplices}.  We
also show that depending on how the interfacial variables are treated,
one may end up with a discrete DPG system that has a nontrivial kernel
for some alignment of mesh facets, a difficulty that we have not
previously 
encountered in any other DPG example. We then provide
practical solutions for solving for the DPG wave approximations
despite the null space. \blue{The solutions computed using these
  techniques were observed to converge at the optimal rate.}

In Section~\ref{JGsc:model} we introduce the model wave problem and
put it into the abstract variational setting of~\cite{JG:DGNS2017}. In
Section~\ref{JG:weakform} we introduce a broken weak formulation (upon
which the DPG method is based) and prove its wellposedness subject to
a density condition. In Section~\ref{JGsc:density} we give a proof of
the density condition for a simple multi-dimen- sional domain.  In
Section~\ref{JGsc:estimates} we prove error estimates for the ideal
DPG method for solutions with enough regularity.  Finally, numerical
experiments and implementation techniques are presented in Section
\ref{JGsc:numerical}.

\section{The transient wave problem}
\label{JGsc:model}

Let $\om_0$ be a \blue{spatial domain in}  $\mathbb{R}^{d}$, with
\blue{Lipschitz} boundary $\dd \om_0$, and let
$\Omega = \Omega_0\times (0,T)$ be the spacetime domain, where $T>0$
represents the final time.  We consider the first order system for the
wave equation given by
\begin{subequations}
  \label{JG:model} 
\begin{align}
\dt q - \blue{c} \grad_x \mu = g,
\\
\dt \mu -  \blue{c} \div_x q  = f, 
\end{align}
where $f \in L^2(\om)$ and $g \in L^2(\om)^{d}$ \blue{ and $c>0$ is
  the constant wave speed}.  Here the differential operators
$\mathrm{div}_x $ and $ \grad_x$ represent the (distributional)
divergence and gradient operators that differentiate only along the
spatial components ($x$). We add homogeneous initial and boundary
conditions:
\begin{equation}
  \label{JGmodel-bc}
  \mu|_{t=0} =0, 
\qquad 
q|_{t=0} = 0, 
\qquad \mu|_{\dd\om_{\blue{0}}\times (0,T)} = 0.
\end{equation}
\end{subequations}  
Here, $q$ represents the velocity and $\mu$ the pressure.  We now cast
this problem in the framework of the abstract setting
in~\cite[Appendix~A]{JG:DGNS2017}.

\subsection{The formal wave operator}

Formally, the wave operator generated by the above system may be
considered as a first order distributional derivative operator.
Namely, set $A: L^2(\om)^{d+1} \to \D'(\om)^{d+1}$ by
\begin{equation}
\label{JGeq:A}
A u = \bb{ \dt u_q -\blue{c} \grad_x u_\mu \\ \dt u_\mu - \blue{c}\div_x u_q} 
\end{equation}
where $u$ in $L^2(\om)^{d+1}$ is block partitioned into
\begin{equation}
  \label{JGeq:block}
u = 
\begin{bmatrix}
  u_q \\ u_\mu
\end{bmatrix}, 
\qquad u_q \in L^2(\om)^d, \quad u_\mu \in L^2(\om).  
\end{equation}

Next, we introduce the space
\[
  W(\om) = \{ u \in L^2(\om)^{d+1} : A u \in L^2(\om)^{d+1} \}\blue{.}
\]
By $W(K)$ we mean the similarly defined space on an open subset $K$ of
$\om$, but when considering this space with domain $\om$, we
abbreviate $W(\om) = W$.  Hereon, we denote by $(\cdot, \cdot)$ and
$\|\cdot\|$ the $L^2(\om)^{d+1}$ inner product and norm, 
respectively\blue{, and $\D(\om)^{d+1}$ and $\D'(\om)^{d+1}$ is the space of  infinitely differentiable vector functions with compact support in $\om$ and its dual space, respectively}.
It is well known that the space $W(\om)$ is a Hilbert space when endowed
with the graph norm $\|u \|_W = (\| u\|^2 + \|A u\|^2)^{1/2}$ (see
\cite[Lemma A.1.]{JG:DGNS2017}).  The formal adjoint of $A$ is the
distributional differentiation operator $-A$ and \blue{it} satisfies
\begin{align*}
(Aw, \wt) = -(w, A\wt) \qquad \text{ for all } w, \wt \in \D(\om)^{d+1}.
\end{align*}
\blue{Define the  operator $D: W \to W'$ by}
\begin{equation}
\ip{ D u, v}_W
= ( A u, v)_\om + (u, Av)_\om \qquad \text{for all } u, v \in W. \label{JGdef:D}
\end{equation}
Here $W'$ is the dual space of $W$, and $\ip{\cdot, \cdot}_W$
represents the duality pairing of a \blue{functional in $W'$ with an element
of $W$.} For smooth functions $u, v \in \D(\bar \om)^{d+1}$,
integration by parts shows that
\begin{equation}
  \ip{D u, v}_W = \int_{\dd\om} u_q \cdot (n_t v_q - \blue{c} n_x v_\mu) + u_\mu ( n_t v_\mu -  \blue{c}  n_x \cdot v_q ). \label{JGbdryop}
\end{equation}
Here and throughout, $n =\blue{ (n_x^\texttt{T},n_t)^\texttt{T}}$ represents the unit outward
normal component to $\om$ in $\mathbb{R}^{d+1}$ and functions in
$L^2(\om)^{d+1}$, like the $u$ and $v$ above, are block partitioned as
in~\eqref{JGeq:block}.

\subsection{The unbounded wave operator}

In order to consider the boundary and initial conditions, we now
proceed as suggested in~\cite[Appendix~A]{JG:DGNS2017}, to define an
unbounded operator with a domain that takes these conditions into
account. Below, by abusing the notation, we shall denote this
unbounded operator also by~$A$.

First, let us partition the spacetime boundary $\dd\om$ into
\[
\Gamma_0 = \om_{0} \times\{0\},
\qquad 
\Gamma_T = \om_0\times \{ T \}, 
\qquad 
\Gamma_b = \partial \om_0\times[0,T].
\]
We define the following sets of smooth functions:
\begin{align}
\V &= \{ u\in \D(\bar \om)^{d+1}: u|_{\Gamma_0} = 0, u_\mu|_{\Gamma_b} =0 \}, \label{JGdef:cV} \\
\V^* & = \{ v \in \D (\bar \om) ^{d+1}: v |_{\Gamma_T} = 0, v_\mu|_{\Gamma_b} =0 \}. \label{JGdef:cV*}
\end{align}
Next, let $A: \dom(A) \subset L^2(\om)^{d+1} \to L^2(\om)^{d+1}$ be the unbounded
operator in $L^2(\om)^{d+1}$ defined by the right hand side
of~\eqref{JGeq:A} with 
\begin{equation}
\dom(A) = \{ u \in W: \ip{ D u, v}_W = 0 \text{ for all } v \in
\V^*\}. \label{JGdef:V}
\end{equation}
From~\eqref{JGbdryop}, we see that the set of smooth functions
$\D(\om)^{d+1}$ is contained in $\dom(A).$ Hence, $A$ is a densely
defined operator in $L^2(\om)^{d+1}$. Therefore, it has a uniquely
defined adjoint $A^*$, which is again an unbounded operator. The
adjoint $A^*$ equals the distributional derivative operator $-A$ when
applied to $\dom(A^*)$. This domain is prescribed as in standard
functional analysis~\cite{JG:Brezis11} by
\begin{align*}
\dom(A^*) = 
\big\{ \wt \in L^2(\om)^{d+1}: \;
 \exists\, \ell \in L^2(\om)^{d+1}
   \text{ such that } &(Au, \wt) =(u,\ell)\\
  & \quad \text{ for all } u \in \dom( A)\big\}.
\end{align*}

By definition, $\dom(A)$ is a subset of $W(\om)$.  When this subset is
given the topology of $W(\om)$, we obtain a closed subset of $W(\om)$,
which we call $V$, i.e., $V$ and $\dom(A)$ coincide as sets or vector
spaces, but not as topological spaces. Note that $\dom(A^*)$ is also a
subset of $W$, since for any $\tilde w \in \dom(A^*)$, the
distribution $-A\tilde w$ is in $L^2(\om)^{\blue{d+1}}$.  When $\dom(A^*)$ is given
the topology of $W$, it will be denoted by $V^*$. Observe that since
$V$ is closed, $A$ is a closed operator. 
\blue{ For any $S\subset W$ subspace, the right annihilator of $S$, denote by $\lprp S$, is  defined by
\begin{align}
\lprp S = \{w \in W : \ip{s',w}_W = 0 \text{ for all } s' \in  S\}.
\end{align}}
The definition of $\dom(A^*),$ when written in terms of $D$ reveals  that
 \begin{equation} 
V^* = \blue{\lprp {D(V)}}.  \label{JGperpv*}
 \end{equation}
 Thus $V^*$ is also a closed subset of $W$.

The next observation is that from the definitions of $V$ and the
operator $D$ (namely~\eqref{JGdef:V} and~\eqref{JGdef:D}) it
immediately follows that $\V\subset V$. Note also that if
$v^* \in \V^*$, then $(Au, v^*)=-(u, Av^*) + \ip{Du, \blue{v^*}}_W =-(u, Av^*)$
for all $u\in V$, since $\ip{Du,\blue{v^*}}_W = 0 $ by the definition of
$V$. Therefore $v^*$ is in $V^*$. \blue{To summarize these
  observations, we have introduced $\V, \V^*, V$ and $V^*,$ satisfying}
\begin{equation}
  \label{JGeq:contained}
  \V\subset V \quad\text{ and } \quad \V^* \subset V^*.  
\end{equation}
\blue{These are the abstract ingredients
in the framework of}~\cite[Appendix~A]{JG:DGNS2017} applied to the wave problem.

\section{The broken weak formulation}\label{JG:weakform}
Following the settings of \cite{JG:CarstDemkoGopal16} and
\cite[Appendix]{JG:DGNS2017}, we partition the spacetime \blue{Lipschitz} domain $\om$
into a mesh $\om_h$ of finitely many open elements $K$, \blue{(e.g. $(d+1)$-simplices or $(d+1)$-hyperrectangles)}  such that
$\bar \om = \cup_{K \in \om_h} \bar K$ where
$h=\max_{K \in \om_h} \mathrm{diam}(K)$.  The DPG method is based on a
variational formulation in a ``broken'' analogue of $W$, which we call
$W_h$, defined below.

We let $A_h$ be the wave operator applied element by element, i.e.,
\[
(A_h w)|_K = A(w|_K), \qquad w \in W(K), \quad K \in \om_h.
\]
Let $W_h = \{ w \in L^2(\om)^{d+1}: A_h w \in L^2(\om)^{d+1}\}$.  The
operator 
$
D_h : W_h \to W_h'$ is defined by
\[
\ip{D_h w, v}_{W_h} = (A_h w,v)_\om + (w, A_h v)_\om
\]
for all $w,v \in W_h$, where $\ip{\cdot,\cdot}_{W_h}$ denotes the
duality pairing in $W_h$ in accordance with our previous notation.
Below we abbreviate $\ip{\cdot,\cdot}_{W_h}$ to
$\ip{ \cdot, \cdot}_h$.  Let $D_{h,V}: V \to W_h'$ denote the
restriction of $D_h$ to $V$, i.e., $D_{h,V} = D_h|_V$.  The range of
$D_{h,V}$, denoted by $Q,$ is made into a complete space by the
quotient norm
\begin{align}
\| \rho \|_Q = \inf_{ v \in D_{h,V}^{-1}(\{ \rho \}) } \| v \|_W, 
  \qquad \rho \in Q \equiv \text{ran}(D_{h,V}). \label{JGQnorm}
\end{align}

Define the bilinear form on $(L^2(\om)^{d+1} \times Q) \times W_h$ by
$$ 
  b( (v,\rho), w)  =- (v, A_h w)_\om + \ip{ \rho, w }_h.
$$ 
The ``broken'' variational formulation for the wave problem now reads
as follows. Given any $F$ in the dual space $W_h'$, find
$u\in L^2(\om)^{d+1}$ and $ \lambda \in Q$ such that
\begin{align}
b( (u,\lambda), w) = F(w) \qquad \text{ for all }  w \in W_h.
  \label{JGproblem}
\end{align}
Critical to the success of any numerical approximation of this
formulation, in particular, the DPG approximation, is its
wellposedness.  By~\cite[Theorem A.5]{JG:DGNS2017}, this formulation
is well-posed, provided we verify
\begin{align}
  V &= \lprp{D(V^*)},  \label{JGth:V1} 
  \\
  A: V \to & L^2(\om)^{d+1} \text{ is a bijection.} \label{JGth:V2}
\end{align}
Therefore our next focus is on proving~\eqref{JGth:V1}
and~\eqref{JGth:V2}. Recall from~\eqref{JGeq:contained} that $\V$ and
$\V^*$ are subspaces of smooth functions within $V$ and $V^*$. 
We now
show that~\eqref{JGth:V1} and~\eqref{JGth:V2} follow if these are
dense subspaces.

\begin{theorem}\label{JGthm:wellposedness}
  Suppose 
  \begin{equation}
    \label{JGeq:dense}
    \text{$\V$ is dense in $V$ and $\V^*$ is dense $V^*.$}
  \end{equation}
  Then~\eqref{JGth:V1} and~\eqref{JGth:V2} holds. Consequently, the
  broken weak formulation~\eqref{JGproblem} is well posed.
\end{theorem}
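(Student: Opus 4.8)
The plan is to prove the two structural assertions \eqref{JGth:V1} and \eqref{JGth:V2} separately, obtaining \eqref{JGth:V1} from the density of $\V^*$ in $V^*$ together with the symmetry of $D$, and obtaining the bijectivity \eqref{JGth:V2} from two a priori energy estimates that I would first establish for the smooth classes $\V$ and $\V^*$ and then transfer to $V$ and $V^*$ by density.

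For \eqref{JGth:V1}, the first point to record is that $D$ is symmetric, since $\ip{Du,v}_W = (Au,v)+(u,Av) = \ip{Dv,u}_W$ for all $u,v\in W$. Hence, for every subspace $S\subset W$, $\lprp{D(S)} = \{w\in W:\ \ip{Dw,s}_W = 0 \text{ for all } s\in S\}$. Rewriting the defining relation \eqref{JGdef:V} of $\dom(A)$ in this form shows $V = \lprp{D(\V^*)}$. Since $\V^*\subset V^*$, the inclusion $\lprp{D(V^*)}\subset \lprp{D(\V^*)} = V$ is automatic. For the opposite inclusion I would invoke density: given $w\in V$ and $v^*\in V^*$, pick $v_n\in\V^*$ with $v_n\to v^*$ in $W$; the functional $Dw\in W'$ is continuous, so $\ip{Dw,v^*}_W = \lim_n \ip{Dw,v_n}_W = 0$, whence $w\in\lprp{D(V^*)}$. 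This establishes \eqref{JGth:V1} using only the density of $\V^*$.

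For \eqref{JGth:V2}, I would reduce bijectivity to the two bounds $\|u\|\le T\|Au\|$ for $u\in V$ and $\|v\|\le T\|Av\|$ for $v\in V^*$. Both follow from the classical energy identity for the first order system. For smooth $u\in\V$ with $(g,f)=Au$, differentiating $E(t) = \tfrac12\|u(\cdot,t)\|_{L^2(\om_0)}^2$ and substituting \eqref{JGeq:A} gives $E'(t) = \int_{\om_0}(u_q\cdot g + u_\mu f) + c\int_{\dd\om_0} u_\mu\,(u_q\cdot n_x)$; the lateral term vanishes because $u_\mu|_{\Gamma_b}=0$, and $E(0)=0$ because $u|_{\Gamma_0}=0$. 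A Cauchy--Schwarz bound followed by a Gronwall-type integration then yields $\|u(\cdot,t)\|_{L^2(\om_0)}\le\int_0^t\|Au(\cdot,s)\|_{L^2(\om_0)}\,ds$, and integrating in $t$ over $(0,T)$ gives $\|u\|\le T\|Au\|$. Running the same computation backward from $t=T$, now using $v|_{\Gamma_T}=0$ and $v_\mu|_{\Gamma_b}=0$, produces $\|v\|\le T\|Av\|$ for smooth $v\in\V^*$. Because $w\mapsto Aw$ is continuous from $W$ into $L^2(\om)^{d+1}$ and $\V$, $\V^*$ are dense in $V$, $V^*$, graph-norm limits preserve both inequalities, so they hold on all of $V$ and all of $V^*$.

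Finally I would assemble bijectivity from these bounds. The estimate on $V=\dom(A)$ makes $A$ injective and, since $A$ is closed, makes $\operatorname{ran}(A)$ closed. The estimate on $V^*=\dom(A^*)$ together with $A^*=-A$ makes $A^*$ injective, i.e. $\ker(A^*)=\{0\}$; as $\overline{\operatorname{ran}(A)} = \ker(A^*)^\perp$ for a densely defined closed operator, $\operatorname{ran}(A)$ is dense. A dense closed subspace is the whole space, so $A$ is onto, which is \eqref{JGth:V2}. With \eqref{JGth:V1} and \eqref{JGth:V2} verified, \cite[Theorem A.5]{JG:DGNS2017} delivers the wellposedness of \eqref{JGproblem}. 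The step I expect to be the main obstacle is the energy estimate itself: justifying the integration by parts and pinning the two boundary contributions entirely to the conditions that define $\V$ and $\V^*$, and carrying out the Gronwall argument uniformly over the time slices; once both bounds are available, the functional-analytic conclusion is routine.
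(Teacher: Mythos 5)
Your proposal is correct and follows essentially the same route as the paper: condition \eqref{JGth:V1} via the symmetry/continuity of $D$ and density of $\V^*$, and condition \eqref{JGth:V2} via energy estimates proved first on the smooth classes $\V$, $\V^*$, transferred to $V$, $V^*$ by density, and combined with the closed range theorem and injectivity of $A^*$ to get surjectivity. The only differences are cosmetic: you derive the energy bound in differential (Gronwall) form with constant $T$, while the paper integrates $\|v\|^2$ directly to get the constant $2T$, and you spell out the annihilator argument that the paper compresses into one sentence.
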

\begin{proof}
  In view of the continuity of $D$, ~\eqref{JGeq:contained}, and the
  assumption that $\V^*$ is dense in $V^*,$ the
  condition~\eqref{JGth:V1} now immediately follows.

  Next, we will prove that 
  \begin{subequations}
    \label{JGeq:bddbelowsmooth}
    \begin{align}
    \|u\| \leq 2T \| A u\|,  \qquad \text{ for all } u \in \V ,\\
    \|v\| \leq 2T \| A^* v\| , \qquad \text{ for all } v \in \V^*.
  \end{align}  
\end{subequations}
  These inequalities follow by well-known energy arguments, as shown
  in~\cite[Lemma~3]{JG:Wieners16}. We briefly include the proof for
  completeness. Let $v\in \V^*$. Then
  \begin{align*}
    \| v\|^2
    &
      = \int_0^T\left( \int_{\om_0} |v(x,t)|^2 \;dx \right) dt
      =
      2\int_0^T \!\! \int_T^t \!\! \int_{\om_0} \partial_s  v(x,s)\cdot v(x,s) \,dx\,ds\,dt
    \\
    & = 2 \int_0^T\!\! \int_t^T\!\! \int_{\om_0} v(x,s) \cdot A^* v(x,s) \, dx\, ds\, dt -2\blue{c} \int_0 ^T\!\!
    \int_t^T \!\!  \int_{\partial\om_0}   (v_q\cdot n_x)v_ \mu \,
      dx \, ds \, dt 
    \\
    & \leq 2T \| v\|\; \|A^* v\|.
  \end{align*}
  The inequality for $\V$ is similarly proved by using its boundary
  conditions instead of those of $\V^*$. 

  Using the density assumptions, we conclude
  that~\eqref{JGeq:bddbelowsmooth} implies 
  \begin{subequations}
  \begin{align}
    \| u \| \le 2T \|Au \|
    & \qquad \text{ for all } u \in V \text{
      and} \label{JGbdb:V}
    \\
    \| v \| \le 2T \|A^*v\|&\qquad \text{ for all } v \in V^*.
                          \label{JGbdb:V*}
  \end{align}
  \end{subequations}
  The inequality~\eqref{JGbdb:V} and the closed range theorem for
  closed operators imply that $A: \dom(A)= V \to L^2(\om)^{d+1}$ is
  injective and has closed range. Moreover, its adjoint $A^*$ is
  injective (on its domain) by \eqref{JGbdb:V*}, so the range of $A$
  must be all of $L^2(\om)^{d+1}$ (see e.g., \cite[Corollary
  2.18]{JG:Brezis11}).  Hence~$A$ is a bijection, i.e.,
  condition~\eqref{JGth:V2} holds.  Finally, since we have verified
  both~\eqref{JGth:V1} and~\eqref{JGth:V2}, applying \cite[Theorem
  A.5]{JG:DGNS2017}, the wellposedness {follows}.
\end{proof}

Note that the wellposedness result of
Theorem~\ref{JGthm:wellposedness}, in particular, implies that
\begin{equation}
  \label{JGeq:beta}
\beta = \;\inf_{0 \ne (v, \rho) \in L^2(\om)^{d+1}\times Q}\;
 \sup_{ 0 \ne w \in W_h } \frac{ b((v,\rho), w)}{ \| (v,
   \rho)\|_{L^2(\om)^{d+1} \times Q}\blue{ \| w \|_{W_h}}} >0.
\end{equation}

\section{Verification of the density condition} \label{JGsc:density} 

In this section, we verify~\eqref{JGeq:dense} for a simple domain,
namely a hyperrectangle (or an orthotope). Accordingly, throughout
this section, we fix $\om = \om_0\times (0,T)$ and
\[
\om_0 = \prod_{i=1}^{d} (0,a_i),
\]
for some $a_i>0$.  \blue{While density of smooth functions in general
  graph spaces can be proved by standard Sobolev space
  techniques~\cite{JG:Adams75}, to obtain the density of smooth
  functions with boundary conditions (like those in $\V$) we need more
  arguments.}  The following proof has some similarities with the
proof of \cite[Theorem~3.1]{JG:DGNS2017}, an analogous density result
for the one-dimensional Schr\"odinger operator. The main differences
from~\cite{JG:DGNS2017} in the proof below include the consideration
of multiple spatial dimensions and the construction of extension
operators for vector functions in the wave graph space by combining
even and odd reflections appropriately.

\begin{theorem}
  On the above $\om$, $\V^*$ is dense in $V^*$ and $\V$ is dense in
  $V$.
\end{theorem}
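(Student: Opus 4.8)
The plan is to prove the statement for $V$; the case of $V^*$ then follows by the time-reversal $t \mapsto T-t$, which interchanges the roles of $\Gamma_0$ and $\Gamma_T$ and turns $A$ into $-A$, carrying the conditions defining $V^*$ into those defining $V$. Fixing $u \in V$, I would construct, for small parameters, a function in $\V$ approximating $u$ in the graph norm $\|\cdot\|_W$ by a three-stage procedure: extend $u$ across the spatial boundary $\Gamma_b$ by reflection, extend it across the temporal boundaries, and finally translate and mollify so that the result is smooth on $\bar\om$ and inherits the boundary conditions of $\V$.

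The core construction is a reflection operator across each spatial face of $\om_0$, say $\{x_1=0\}$, built by combining even and odd reflections componentwise: the pressure $u_\mu$ is extended as an odd function of $x_1$, the normal velocity $u_{q,1}$ as an even function, and each tangential component $u_{q,j}$ with $j\neq 1$ as an odd function. A direct parity count shows that with this choice the operator $A$ commutes with the reflection $R$, so that $A(Ru) = R(Au)$ away from the interface. The only $x_1$-derivatives entering $Au$ are $\partial_{x_1} u_\mu$ and $\partial_{x_1} u_{q,1}$; the even extension makes $u_{q,1}$ continuous across $\{x_1=0\}$ automatically, while the odd extension of $u_\mu$ is continuous there precisely because $u_\mu$ vanishes on $\Gamma_b$. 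Hence no singular (Dirac) contributions appear at the interface and the reflected field lies in $W$ on the enlarged domain. Iterating over all $2d$ spatial faces---the reflections across distinct faces commute, so the composition is well defined and the parities are consistent at edges and corners---extends $u$ to a field in the graph space on a spatial neighborhood of $\bar\om_0$, still satisfying $u_\mu=0$ on each reflected face.

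For the temporal directions I would exploit the asymmetry of the conditions on $u\in V$: since $u$ vanishes on $\Gamma_0$, the extension of $u$ by zero to $t<0$ remains in the graph space (no $\partial_t$-delta arises because both components are continuous at $t=0$), whereas across $\Gamma_T$, where no condition is imposed, an even reflection in $t$ of both components keeps them continuous and hence keeps $A(\cdot)$ in $L^2$. This produces an extension $\tilde u$ in the graph space on a full spacetime neighborhood of $\bar\om$. I would then translate $\tilde u$ by a small $\epsilon$ in time so that it vanishes for $t<\epsilon$, and convolve with a symmetric mollifier of radius less than $\epsilon$. Because convolution commutes with the constant-coefficient operator $A$, the mollified field converges to $u$ in the graph norm as the parameters tend to zero; because the mollifier is symmetric, oddness of $u_\mu$ about each spatial face is preserved, so $u_\mu=0$ on $\Gamma_b$; and because the field vanishes near $t=0$, it vanishes on $\Gamma_0$. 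The restriction to $\bar\om$ is therefore smooth and lies in $\V$.

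I expect the main obstacle to be the rigorous verification that the reflected extension belongs to $W$---that is, showing $A\tilde u\in L^2$ with no interface distributions. This is exactly where the precise pairing of even and odd parities with the boundary condition $u_\mu|_{\Gamma_b}=0$ is essential, and it must be checked carefully through the integration-by-parts identity~\eqref{JGbdryop}, tracking which normal derivatives can produce jump terms. The bookkeeping of parities at the edges and corners of the hyperrectangle, where several reflections are composed, is the secondary technical point that the hyperrectangular geometry is chosen to keep manageable.
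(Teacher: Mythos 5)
Your overall strategy---spatial reflections with the pressure odd, the normal velocity even, and the tangential velocities odd, followed by a forward time translation and symmetric mollification---is the same as the paper's (its Steps 1--3). But one of your steps is genuinely wrong, and the step you yourself flag as the main obstacle is left unresolved, whereas the paper's treatment of it is the real content of the proof.

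\textbf{The even-in-$t$ reflection across $\Gamma_T$ does not preserve the graph space.} Membership in $W$ controls only the combinations $\dt u_q - c\grad_x u_\mu$ and $\dt u_\mu - c\div_x u_q$, not the individual derivatives. If both blocks are reflected evenly about $t=T$, the bulk part of $A$ applied to the reflected field involves $\dt u_q + c\grad_x u_\mu$ and $\dt u_\mu + c\div_x u_q$ evaluated at reflected times, and these are not in $L^2$ for general $u\in W$. Concretely, for $d=1$ take $f \in L^2(\RRR)$ with $f \notin H^1_{\mathrm{loc}}(\RRR)$ and set $u_q = u_\mu = f(x+ct)$; then $Au=0$, but the even reflection $w(x,t)=u(x,2T-t)$ has $(Aw)_q = -2c\,f'(x+c(2T-t)) \notin L^2$. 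So continuity of the components rules out Dirac terms \emph{at} the interface but does not make $Aw$ square-integrable \emph{beyond} it, and your claim that the extension lies in the graph space on a full neighborhood of $\bar\om$ is false. Time symmetries of this system must flip the sign of one block ($q$ even and $\mu$ odd, or vice versa)---in exact parallel with the spatial parities you handled correctly---but such a reflection across $\Gamma_T$ then produces interface deltas, since elements of $V$ satisfy no condition at $\Gamma_T$. The upshot is that no reflection across $\Gamma_T$ can work; the paper instead extends by zero there and translates forward in time, so that the mollifier, evaluated at points of $\bar\om$, never samples times beyond $T$. (Your construction would in fact survive for the same accidental reason, but not by the reasoning you give.) The same oversight affects your reduction of the $V^*$ case to the $V$ case: pure time reversal does not turn $A$ into $-A$; it must be composed with the sign flip $u_\mu \mapsto -u_\mu$.

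\textbf{The interface argument presupposes traces that $V$ does not come with.} You argue that the reflected field lies in $W$ because the odd extension of $u_\mu$ is ``continuous'' across $\Gamma_b$, ``because $u_\mu$ vanishes on $\Gamma_b$.'' But for $u \in V$ the condition $u_\mu|_{\Gamma_b}=0$ is not a pointwise or trace statement: $V$ is defined in~\eqref{JGdef:V} purely weakly, by $\ip{Du,v}_W = 0$ for all $v \in \V^*$, and the paper deliberately avoids developing a trace theory for $W$. The identity~\eqref{JGbdryop}, which you propose to check jump terms with, is derived only for smooth functions and cannot be applied to $u\in V$. The paper's mechanism---the idea missing from your sketch---is duality: it introduces folding operators $E'$ adjoint to the extension $E$, shows by the chain rule that $E'A\varphi = AE'\varphi$ for test functions $\varphi \in \D(Q)^{d+1}$, and verifies that $E'\varphi \in \V^*$; then $\ip{AEv,\varphi}_{\D(Q)^{d+1}} = (Av, E'\varphi)_\om - \ip{Dv, E'\varphi}_W = (EAv,\varphi)_Q$, where the $D$-term vanishes \emph{by the definition of} $V$. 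This converts your heuristic ``no interface distributions'' into a proof without ever taking a trace of $u$; the same device is what legitimizes the zero extension across $\Gamma_0$, which you also justify only by an (unavailable) continuity argument.
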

\begin{proof}  
  We shall only prove that $\V$ is dense in $V$, since the proof of
  the density of $\V^*$ in $V^*$ is similar.
  We divide the proof into three main steps. 

  {\em Step 1.  Extension:} In this step, we will extend a function in
  $V$ using spatial reflections to a domain which has larger spatial
  extent than $\om$ (see Figure~\ref{JGfigtrans}).

Let $e_i$ denote the standard unit basis
  vectors in $\RRR^{d+1}$ and \blue{$y \in \RRR^{d+1}$ arbitrary}. The following operations
\[
R_{i,-} \blue{y} = \blue{y}  - 2\blue{y}_i e_i, \qquad 
R_{i,+} \blue{y} =\blue{y}  + 2(a_i -\blue{y}_i) e_i
\]
perform reflections of \blue{the coordinate vector \blue{$y$ about $y_i=0$
and $y_i=a_i$,} for $i=1, \ldots, d$}. We set $Q_0 \equiv \overline \om$
and then define extended domains $Q_i$ in a recursive way, starting
from $i=1$ through $i=d$ as follows.
\[
Q_{i,-} = R_{i,-}^{-1} Q_{i-1}, \qquad Q_{i,+} = R_{i,+}^{-1} Q_{i-1}, \quad
Q_i = Q_{i,-} \cup Q_{i-1} \cup Q_{i,+}.
\]
The final extended domain is $Q \equiv Q_d.$

Next, we introduce even and odd extensions (in the $x_i$-direction) of
scalar functions. Namely, let
$G_{i,e}, G_{i,o}: L^2(Q_{i-1}) \to L^2(Q_i)$ be defined by 
 \begin{align} \label{JGGe}
 G_{i,e} f(x,t ) =  \left\{ \begin{array}{ll}
                              f( R_{i,-}\blue{(x,t)}) & \text{ if } (x,t)  \in Q_{i,-} ,  \\
                              f( R_{i,+}\blue{(x,t)}) & \text{ if }    (x,t) \in Q_{i,+}, \\
                              f(x,t) & \text{ if }    (x,t) \in Q_{i-1},
    \end{array}\right.                                 
\end{align}
and
\begin{align}\label{JGGo}
G_{i,o} f ( x, t) = \left\{ \begin{array}{ll}
                                    -f(R_{i,-} \blue{(x,t)}) & \text{ if }   (x,t)  \in Q_{i,-}, \\
                                     -f(R_{i,+}  \blue{(x,t)})& \text{ if }   (x,t) \in Q_{i,+},\\
                               f(x,t) & \text{ if }    (x,t) \in Q_{i-1}.
\end{array}\right.
\end{align}
In the case of a vector function $v \in L^2(Q_{i-1})^{d+1}$, we define
$ G_i v(x,t)$ to be the extended vector function obtained by extending
(in the $i$th direction) all the components of $v$ using the odd
scalar extension, except the $i$th component, which is extended using
the even scalar extension.  In other words, for any $i=1,\ldots, d$,
we define ${G_i} : L^2(Q_{i-1})^{d+1} \to L^2(Q_i)^{d+1}$ by
\begin{equation}
\label{JGextended}
  G_i v = (G_{i,e} v_i) e_i +   \sum_{j\ne i} (G_{i,o}v_j) e_j
\end{equation}
where the sum runs over all $j=1, \ldots, d+1$ except $i$.  Let
$E_k = G_k \circ G_{k-1} \circ \ldots \circ G_1$.  The cumulative
extension over all spatial directions is thus obtained using $E =
E_d$. It extends functions in $\overline\om$ to $Q$.

By change of variable formula for integration, we obtain 
\begin{align*}
(G_{i,o} f, g)_{Q_i} = (f, G'_{i,o} g)_{Q_{i-1}} , \qquad \text{ for all } f \in L^2(Q_{i-1}), 
\quad g \in L^2(Q_{i}),
\\
(G_{i,e} f, g)_{Q_i} = (f, G'_{i,e} g)_{Q_{i-1}} , \qquad \text{ for all } f \in L^2(Q_{i-1}), \quad 
g \in L^2(Q_{i}),
\end{align*}
where the ``folding'' operators
$G_{i,e/o}': L^2(Q_i) \to L^2(Q_{i-1}),$ that go the reverse direction
of the extension operators, are defined by
\begin{align}
G'_{i,o} g (x,t)  = g(x,t) - g(R_{i,-}^{-1}  \blue{(x,t)}) - g( R_{i,+}^{-1}  \blue{(x,t)}), \label{JGGeop}\\
G'_{i,e} g (x,t)  = g(x,t) +g(R_{i,-}^{-1} \blue{(x,t)}) + g( R_{i,+}^{-1} \blue{(x,t)}).
\end{align}
These scalar folding  operators combine to form an analogue for vector
functions as in~\eqref{JGextended}, namely
\begin{align*}
  G_i' w = (G_{i,e}' w_i) e_i +   \sum_{j\ne i} (G_{i,o}'w_j) e_j.
\end{align*}
It satisfies $ (G_i v, w )_{Q_i} = (v , G'_i w)_{Q_{i-1}} $ for all
$ v \in L^2(Q_{i-1})^{d+1}$, $w \in L^2(Q_i)^{d+1},$ and for each $i$
from $1$ to $d$. Let $E_k' = G_k' \circ G_{k+1}' \cdots \circ
G_d'$. Then $E' = E_1'$ folds functions in $Q$ to $\overline\om$ and
is the adjoint of the extension $E$ in the following sense.
\begin{equation}
(E u, w )_{Q} = (u, E' w)_{\om} , \qquad \text{ for all }
 u \in L^2(\om)^{d+1},\qquad 
w \in L^2(Q)^{d+1}. \label{JGGinv}
\end{equation}

We want to prove that $E v$ is in $W(Q)$ for any $v \in V$.  Note that
if $w \in L^2(\om)^{d+1}$, then $Ew$ in $L^2(Q)^{{d+1}}$, because each $G_i$
maps $L^2$ functions into $L^2$ per \eqref{JGextended}.
Therefore, in order to prove $Ev $ is in $W(Q)$, it only remains to
prove that $A({E}v)$ is in $L^2(Q)^{d+1}.$  Let
$\varphi \in \D(Q)^{d+1}$ (where we abuse the notation and write
$\D(Q)$ for $\D(Q^0)$ whenever $Q^0$ is the interior of $Q$).  Using
\eqref{JGGinv}, the action of the distribution $AEv$ on $\varphi$
equals
\begin{equation}
\label{JG:eqAE}
\ip{ A E v, \varphi}_{\D(Q)^{d+1}} = 
- (Ev, A\varphi)_{Q}  = -  ( v, E' A \varphi)_{\om}.
\end{equation}

To analyze the last term, first observe that by the chain rule applied
to a smooth scalar function $\phi$ on $Q_1$,
\begin{gather*}
   \dt (G'_{i,o} \phi) = G'_{i,o} \dt \phi, \quad 
   \partial_i(G_{i,o}'\phi) = G_{i,e}' (\partial_i \phi), 
   \quad 
   \partial_j(G_{i,o}'\phi) = G_{i,o}' (\partial_j \phi), 
\\  
   \dt (G'_{i,e} \phi) = G'_{i,e} \dt \phi, \quad 
   \partial_i(G_{i,e}'\phi) = G_{i,o}' (\partial_i \phi), 
   \quad 
   \partial_j(G_{i,e}'\phi) = G_{i,e}' (\partial_j \phi), 
\end{gather*}
for all $j \ne i$. Combining these appropriately for smooth vector
function $\psi$  on $Q_i$, we find that
\[
  \dt (G'_i \psi) = G'_i \dt \psi,
  \quad 
  \blue{c}\nabla_x G' _{i,o} \psi_\mu = G'_{i} \blue{(c\nabla_{x} \psi_\mu)},
  \quad 
  \blue{c}\,\mathrm{div}_x G'_{i}\psi_q = G'_{i,o} \blue{(c\,\mathrm{div}_x \, \psi_q)}.
\]
Thus, for any $\varphi \in \D(Q)^{d+1}$ we have
$ E_{i}'A \varphi = A E_i' \varphi $ for all $ i = 1, \cdots, d, $ and
in particular
\begin{equation}
  \label{JGeq:2}
E' A \varphi = A E' \varphi.   
\end{equation}
Returning to~\eqref{JG:eqAE} and using~\eqref{JGeq:2} and \eqref{JGdef:D},
\begin{equation}
\label{JG:eq:1}
\ip{ A  E v, \varphi}_{\D(Q)^{d+1}} = (Av,  E' \varphi)_\om- \ip{ D v, E' \varphi} _{W(\om)}.
\end{equation}

We shall now show that the last term above vanishes. Since $v$ is in
$V$, the last term will vanish by the definition of $V$, provided
$E'\varphi$ is in $\V^*$. To prove that $E'\varphi$ is in $\V^*$, we
only need to verify that $E'\varphi$ satisfies the boundary conditions
in~\eqref{JGdef:cV*}.  Since $ \varphi$ is compactly supported in $Q$,
we obviously have $( E'\varphi)|_{\Gamma_T} = 0$ as $E'$ only involves
spatial folding.  

Next, we claim that $[E'\varphi]_{\mu} |_{\Gamma_b}= 0$ also.  To see
this, let $\Gamma^j$ denote the two \blue{facets} of $\partial Q_{j}$ where
$x_j$ is constant and $\gamma^j$ denote the two \blue{facets} of
$\partial Q_{j-1}$ where $x_j$ is constant.  The value of
$G_{d,o}' \varphi_\mu(x,t)$ for any $(x,t)$ in $\gamma^{d-1}$ is the
sum of the three terms in~\eqref{JGGeop}, two of which cancel each
other, and one of which vanishes because
$\varphi_\mu|_{\Gamma^d}=0$. Thus
$\varphi_\mu|_{\Gamma^d}=0 \implies (G_{d,o}'
\varphi_\mu)\big|_{\partial Q_{d-1}}=0$ (where we have also used the
fact that $\varphi_\mu$ vanishes on the remainder
$\partial Q_{d-1} \setminus \gamma^{d-1}$).  The same argument can now
be repeated to get that
$(G_{d,o}' \varphi_\mu)\big|_{\Gamma^{d-1}}=0 \implies \left(G_{d-1,o}
  ( G_{d,o} \varphi_\mu)\right)\big|_{\partial Q_{d-2}} = 0.  $
Continuing similarly, we obtain that
$ [E'\varphi]_{\mu} = G_{1,o}' \circ G_{2,o}'\circ \cdots \circ
G_{d,o}' \varphi_\mu$ vanishes on $ \partial Q_0 = \Gamma_b$. Thus, the
last term in~\eqref{JG:eq:1} is zero and by~\eqref{JGGinv} we conclude
that
\begin{equation}
  \label{JGeq:AE-EA}
  \ip{ A  E v, \varphi}_{\D(Q)^{d+1}} = (EAv,  \varphi)_Q
\end{equation}
for all $\varphi$ in $\D(Q)^{d+1}$. 

By virtue of~\eqref{JGeq:AE-EA}, we have proved that for any
$v \in V,$ $AE v$ is in $L^2(Q)^{d+1},$ $AE v$ coincides with $EAv$,
and $Ev$ is in $W(Q)$.

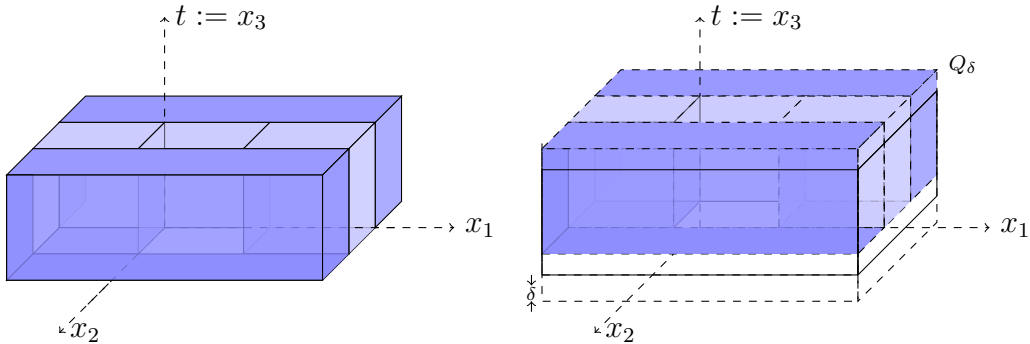
\begin{figure}[b]
\begin{tikzpicture}[scale=0.70]
\begin{scope}[shift={(0,-.5)}]
\begin{scope}[shift={(-.5,-.5)}] 

\begin{scope}[shift={(1,1)}]
\draw[fill=blue!25, fill opacity=0.8] (-3,0)--(-3,2) -- (-2.5,2) -- (-2.5,0.5) --cycle;
\draw[fill=blue!10,fill opacity=0.8] (-3,0)--(3,0)-- (3,2) --(-3,2)--cycle;
\draw[fill=blue!40,fill opacity=0.8] (3.5,0.5) --(3.5,2.5)--(3,2) -- (3,0)--cycle;
\end{scope}

\begin{scope}[shift={(1,3)}]
\draw[fill=blue!40]  (-3,0)--(3,0) --(3.5,0.5) --(-2.5,0.5)--cycle;
\end{scope}

\end{scope}
 \draw[->,dashed] (-0.5,0.5)--(-0.5,4.5) [right]node{ $t:=x_3$};
 \draw[->,dashed] (-0.5,0.5)--(5,0.5) [right]node{ $x_1$};
 \draw[->,dashed] (-0.5,0.5)--(-2.5,-1.5) [right]node{ $x_2$};


\begin{scope} 
\begin{scope}
\end{scope}
\begin{scope}[shift={(0,2)}]
\draw[fill=blue!20] (-1,0)--(1,0) --(1.5,0.5) --(-0.5,0.5)--cycle;

\end{scope}

%
\end{scope}


\begin{scope}[shift={(-2,0)}] 
\begin{scope}
\draw[fill=blue!20] (-1,0)--(1,0) --(1.5,0.5) --(-0.5,0.5)--cycle;
\end{scope}
\begin{scope}[shift={(0,2)}]
\draw[fill=blue!20] (-1,0)--(1,0) --(1.5,0.5) --(-0.5,0.5)--cycle;
\end{scope}

\begin{scope}
\draw[fill=blue!20,fill opacity=0.3,opacity=0.6] (1.5,0.5) --(1.5,2.5)--(1,2) -- (1,0);

\end{scope}
\end{scope}
\begin{scope}[shift={(+2,0)}] 
\begin{scope}
\draw[fill=blue!20] (-1,0)--(1,0) --(1.5,0.5) --(-0.5,0.5)--cycle;
\end{scope}
\begin{scope}[shift={(0,2)}]
\draw[fill=blue!20] (-1,0)--(1,0) --(1.5,0.5) --(-0.5,0.5)--cycle;
\end{scope}

\begin{scope}
\draw[fill=blue!15, fill opacity=0.8] (-0.5,0.5)--(-0.5,1.5) -- (-1,2) -- (-1,0) --cycle;
\draw[fill=blue!20,fill opacity=0.8] (-1,0)--(1,0)-- (1,2) --(-1,2)--cycle;
\draw[fill=blue!20,fill opacity=0.8] (1.5,0.5) --(1.5,2.5)--(1,2) -- (1,0);

\end{scope}
\end{scope}
\begin{scope}[shift={(-.5,-.5)}] 
\begin{scope}
\draw[fill=blue!40] (-3,0)--(3,0) --(3.5,0.5) --(-2.5,0.5)--cycle;
\end{scope}
\begin{scope}[shift={(0,2)}]
\draw[fill=blue!40]  (-3,0)--(3,0) --(3.5,0.5) --(-2.5,0.5)--cycle;
\end{scope}

 \draw[-,dashed] (-0,2)--(0,2);
 \draw[-,dashed] (-0,1)--(4.5,1);
 \draw[-,dashed] (-0.5,0.5)--(-1.5,-.5);

\begin{scope}
\draw[fill=blue!45, fill opacity=0.8] (-3,0)--(-3,2) -- (-2.5,2) -- (-2.5,0.5) --cycle;
\draw[fill=blue!45,fill opacity=0.8] (-3,0)--(3,0)-- (3,2) --(-3,2)--cycle;
\draw[fill=blue!45,fill opacity=0.8] (3.5,0.5) --(3.5,2.5)--(3,2) -- (3,0);
\end{scope}

\end{scope}
\end{scope}
\end{tikzpicture}
\begin{tikzpicture}[scale=0.70,shift={(0,3)}]
\begin{scope}[shift={(0,8)}]
\begin{scope}[shift={(-.5,-.5)}] 
\begin{scope}[shift={(1,1)}]
\draw[fill=blue!25, fill opacity=0.8] (-3,0)--(-3,2) -- (-2.5,2) -- (-2.5,0.5) --cycle;
\draw[fill=blue!10,fill opacity=0.8] (-3,0)--(3,0)-- (3,2) --(-3,2)--cycle;
\draw[dashed,fill=blue!40,fill opacity=0.8] (3.5,0.5) --(3.5,2.5)--(3,2) -- (3,0)--cycle;
\end{scope}

\begin{scope}[shift={(1,3)}]
\draw[dashed,fill=blue!40]  (-3,0)--(3,0) --(3.5,0.5) --(-2.5,0.5)--cycle;
\end{scope}

\end{scope}
\begin{scope}[shift={(0,-.5)}]
 \draw[->,dashed] (-0.5,0.5)--(-0.5,4.5) [right]node{ $t:=x_3$};
 \draw[->,dashed] (-0.5,0.5)--(5,0.5) [right]node{ $x_1$};
 \draw[->,dashed] (-0.5,0.5)--(-2.5,-1.5) [right]node{ $x_2$};
\end{scope}

\begin{scope} 
\begin{scope}
\end{scope}
\begin{scope}[shift={(0,2)}]
\draw[dashed,fill=blue!20] (-1,0)--(1,0) --(1.5,0.5) --(-0.5,0.5)--cycle;
\end{scope}
\end{scope}


\begin{scope}[shift={(-2,0)}] 
\begin{scope}
\draw[fill=blue!20] (-1,0)--(1,0) --(1.5,0.5) --(-0.5,0.5)--cycle;
\end{scope}
\begin{scope}[shift={(0,2)}]
\draw[dashed,fill=blue!20] (-1,0)--(1,0) --(1.5,0.5) --(-0.5,0.5)--cycle;
\end{scope}

\begin{scope}
  \draw[fill=blue!15, fill opacity=0.8] (-0.5,0.5)--(-0.5,1.5) -- (-1,2) -- (-1,0) --cycle;
  \draw[dashed,fill=blue!20,fill opacity=0.8] (-1,0)--(1,0)-- (1,2) --(-1,2)--cycle;
  \draw[,fill=blue!20,fill opacity=0.3,opacity=0.6] (1.5,0.5) --(1.5,2.5)--(1,2) -- (1,0);
\end{scope}
\end{scope}
\begin{scope}[shift={(+2,0)}] 
\begin{scope}
\draw[dashed,fill=blue!20] (-1,0)--(1,0) --(1.5,0.5) --(-0.5,0.5)--cycle;
\end{scope}
\begin{scope}[shift={(0,2)}]
  \draw[dashed,fill=blue!20] (-1,0)--(1,0) --(1.5,0.5) --(-0.5,0.5)--cycle;
\end{scope}

\begin{scope}
  \draw[dashed,fill=blue!15, fill opacity=0.8] (-0.5,0.5)--(-0.5,1.5) -- (-1,2) -- (-1,0) --cycle;
  \draw[dashed,fill=blue!20,fill opacity=0.8] (-1,0)--(1,0)-- (1,2) --(-1,2)--cycle;
  \draw[dashed,fill=blue!20,fill opacity=0.8] (1.5,0.5) --(1.5,2.5)--(1,2) -- (1,0);

\end{scope}
\end{scope}
\begin{scope}[shift={(-.5,-.5)}] 
  \begin{scope}
    \draw[dashed,fill=blue!40] (-3,0)--(3,0) --(3.5,0.5) --(-2.5,0.5)--cycle;
  \end{scope}
  \begin{scope}[shift={(0,2)}]
    \draw[dashed,fill=blue!40]  (-3,0)--(3,0) --(3.5,0.5) --(-2.5,0.5)--cycle;
  \end{scope}


  \begin{scope}
    \draw[fill=blue!45, fill opacity=0.8] (-3,0)--(-3,2) -- (-2.5,2) -- (-2.5,0.5) --cycle;
    \fill[blue!45, opacity=0.8] (-3,0)--(3,0)-- (3,2) --(-3,2)--cycle;
    \draw[dashed] (-3,-.4)--(3,-.4)-- (3,2) --(-3,2)--cycle;
    \draw[dashed] (3.5,0.5) --(3.5,2.5)--(3,2) -- (3,0);
  \end{scope}

\end{scope}


\begin{scope}[shift={(-.5,-.9)}] 
\begin{scope}
\end{scope}
\begin{scope}[shift={(0,2)}]
\end{scope}

\begin{scope}
\draw[] (-3,0)--(3,0)-- (3,2) --(-3,2)--cycle; 
\draw[](4.5,1.5) --(4.5,3.5)--(3,2) -- (3,0)--cycle; 
\draw[dashed] (-3,2.5) -- (-3,-.5)--(3,-.5)-- (3,2.5) ; 
\draw[dashed] (3,2) -- (3,-.5) -- (4.5,1) --(4.5,3.5); 
\draw[] (4.5,3.5)--(3,2) ;
\end{scope}
\begin{scope}[shift={(-8.2,-2.3)}]
  \draw[->] (5,2.1+0.2)--(5,2.1);
          \draw[->] (5,2.1-0.5)--(5,2.1-0.3);
          \node at  (5,2.1-0.15) {\fontsize{7}{15}\selectfont$\delta$};
\end{scope}
\node at  (5,4) {\fontsize{8}{15}\selectfont$Q_\delta$};

\end{scope}
\end{scope}
\end{tikzpicture}
\caption{{\em Left:} Extended domains $Q_1$ and $Q_2$ when $\om
  \subseteq \RRR^3$.
{\em Right:} Translation by $\delta$ in the $t$ direction.
\label{JGfigtrans}}
\end{figure}

{\em Step 2. Translation:} In this step, we will translate up the
previously obtained extension in time coordinate. This will give us
room to mollify in the next step. Such a translation step is standard
in many density proofs (see e.g., \cite{JG:Adams75}).

Let $v \in V$ and let $\tilde E v $ denote the extension of $E v$ by
zero to $\mathbb{R}^{d+1},$ i.e., $\tilde E v$ equals $E v $ in $ Q$
and it is zero elsewhere. Denote by $\tau_\delta$ the translation
operator in the $t$-direction by $\delta >0$; i.e.
$(\tau_\delta w) (x,t) = w(x,t-\delta)$ for scalar or vector
functions~$w$. It is well known \cite{JG:Brezis11} that
\begin{align}
  \lim_{\delta \to 0} \|\tau_\delta g - g\|_{L^2(\mathbb{R}^{d+1})}= 0 , \quad 
\forall g \in L^2(\mathbb{R}^{d+1}). 
  \label{JGtranslation}
\end{align}
Let
$Q_\delta = \prod_{ i=1}^{i=d}(-a_i, 2a_i)\times (-\delta, T +
\delta)$ and let $H_\delta$ be the restriction from
$\mathbb{R}^{d+1} $ to~$Q_\delta$.

We will now show that
\begin{align}
  \label{JGeq:AHtE}
 A H_\delta \tau _\delta \tilde E v = H_\delta \tau_\delta \tilde E A v.
\end{align}
By a change of variable,
\begin{align} \label{JGtrans}
(\tau_\delta \tilde E w, \tilde w) _{Q_\delta}  = (E w, \tau_{-\delta} \tilde w)  _ Q
\end{align}
for all $ w\in L^2(\om)^{d+1}$ and
$ \tilde w \in L^2(Q_\delta)^{d+1}.$ Note that
$\tau_\delta \tilde E v|_{\om} \in L^2(\om)^{ \blue{d+1} }$.  The distribution
$A H_\delta \tau _\delta \tilde E v $ applied to a smooth function
$\varphi \in \D(Q_\delta)^{{d+1}} $ equals
\[
 \ip{ A H_\delta \tau_\delta \tilde E v, \varphi}_{\D(Q_\delta)^{d+1}}  =
- (\tau_\delta \tilde E v, A \varphi)_{Q_\delta} = -( Ev, A
\tau_{-\delta} \varphi)_{Q} 
\]
due to \eqref{JGtrans} and \blue{ the fact that}
$\tau_{-\delta} A\varphi = A\tau_{-\delta} \varphi$.
Using also \eqref{JGGinv} and \eqref{JGeq:2}, 
\begin{align*}
  \ip{ A H_\delta \tau_\delta \tilde E v, \varphi}_{\D(Q_\delta)^{d+1}}  
  & = -(v, E' A \tau_{-\delta} \varphi)_\om = -(v, A E'\tau_{-\delta} \varphi)_\om \\
  & = (Av, E'\tau_{-\delta} \varphi)_\om -{ \ip{Dv, E'\tau_{-\delta} \varphi}_W}.
\end{align*}
Note that $E'\tau_{-\delta} \varphi$ satisfies all the
boundary conditions required for it to be in $\V^*$.  
Hence the last
term in the above display is zero. 
We therefore conclude that
$$\ip{ A H_\delta \tau_\delta \tilde E v, \varphi}_{\D(Q_\delta)^{d+1}}
= (\tau_\delta EA v, \varphi)_{Q_\delta},$$ which
proves~\eqref{JGeq:AHtE}.  In particular, 
$H_\delta\tau_\delta \tilde{E} v \in W(Q_\delta)$ whenever $v \in V$.

{\em Step 3. Mollification:} In this step we finish the proof by
considering a $v \in V$ and mollifying the time-translated extension
$\tau_\delta \tilde E v$ constructed above.

To recall the standard symmetric mollifier, let
$\rho_\epsilon \in \D(\mathbb{R}^{d+1})$, for each $\varepsilon >0$ be
defined by
 $$ \rho_\veps(x,t) = \veps^{-(d+1)} \rho_1( \veps^{-1} x,\veps^{-1} t),$$
  where
  \[
  \rho_1(x,t)
  =
  \left\{
    \begin{aligned}
      & k\, \exp\left(-\frac{1}{1-|x|^2-t^2} \right)
      && \text{ if } |x|^2 + t^2<1,
      \\
      & 0
      && \text{ if }|x|^2 + t^2\geq 1,
    \end{aligned}
  \right. 
  \]
  and $k$ is a constant chosen so that $\int_{\RRR^{d+1}} \rho_1 =1.$
  Here $|\cdot|$ is the euclidean norm in $\mathbb{R}^d$.  Let
  $\rho_\veps* v$ denote the function obtained by component-wise
  convolution, i.e, $[\rho_\veps * v]_j = [v]_j * \rho_\varepsilon$
  for all $j$-components. Then $\rho_\veps * v$ is a infinitely smooth
  vector function that satisfies
  \begin{equation}
    \label{JGeq:mollL2}
    \lim_{\veps \to 0} \| v - \rho_\veps * v \|_{\RRR^{d+1} } = 0,
    \qquad 
    \forall \, v \in L^2(\RRR^{d+1})^{d+1}.
  \end{equation}
Consider any 
\[
0 < \delta < \min_{1\leq i \leq d} (a_i/2, T/2),
\]
and define two functions
$ v_\veps = \rho_\veps * \tau_\delta \tilde{E} v$ and
$ a_\veps = \rho_\veps * \tau_\delta \tilde{E} A v.$ Note that the
$ A v_\veps = a_\veps$ on $ \om$ whenever $\veps < \delta/2$, thanks
to \eqref{JGeq:AHtE}.

We now proceed to show that 
\begin{equation}
  \label{JGeq:finalapprox}
  \lim_{\veps \to 0} \left\| v_\veps\big|_\om - v \right\|_W = 0.
\end{equation}
Set
$\delta=3 \veps$ and let
$\veps < \min_{1\leq i \leq d} (a_i/2, T/2)/3$ go to zero. Note that
 \begin{align*}
    \| Av_\veps - Av \|
    & = \| a_\veps - Av \| = 
      \|  \rho_\veps * \tau_\delta \tilde{E} A v - Av \|_\om 
      \\
   & \le 
      \|  \rho_\veps * \tau_\delta \tilde{E} A v - \tilde{E} Av \|_{\RRR^{d+1}}
    \\
    & \le 
      \|  \rho_\veps * \tau_\delta \tilde{E} A v -  
          \tau_\delta \tilde{E} A v \|_{\RRR^{d+1}} 
      + \| \tau_\delta \tilde{E} A v  - \tilde{E} Av \|_{\RRR^{d+1}},
\\
    \| v_\veps - v \|
    &
      \le 
      \| \rho_\veps * \tau_\delta \tilde{E} v  - \tau_\delta \tilde{E} v \|_{\om} 
      + 
      \| \tau_\delta \tilde{E} v - v \|_{\om} 
    \\
    & \le 
      \| \rho_\veps * \tau_\delta \tilde{E} v  - \tau_\delta \tilde{E} v \|_{\RRR^{d+1}} 
      + 
      \| \tau_\delta \tilde{E} v - \tilde{E} v \|_{\RRR^{d+1}}.
  \end{align*}
  Using~\eqref{JGtranslation} and ~\eqref{JGeq:mollL2} it now immediately
  follows that~\eqref{JGeq:finalapprox} holds.

  To conclude, it suffices to prove that $v_\veps|_\om$ is in $\V$.
  Clearly, $\tau_\delta \tilde{E} v$ is identically zero in a
  neighborhood of $\Gamma_0$. Hence we conclude that
  $v_\veps = \rho_\veps * \tau_\delta \tilde{E} v$ vanishes on
  $\Gamma_0$ for small enough $\veps$.  Next, let us examine the value
  of $[v_\veps]_\mu$ at points $(x,t)$ on $\Gamma_b$\blue{, namely}
  \begin{align*}
    [v_\veps]_\mu (x,t) = \int_{\RRR} \int_{\RRR^d} \rho_\veps ( x - x', t-t') [\tau_\delta \tilde E v]_\mu (x', t')\,dx'\,dt'.
\end{align*}
Note that $\rho_\veps(x-x', t-t')$ is a symmetric function of $x'$ about $x$. The other term in the integrand, namely $[\tau_\delta \tilde E v]_\mu (x',\blue{t'})$, is odd about every facet of $\Gamma_b$. Hence the integral of their product vanishes whenever $(x,t) \in \Gamma_b$. Thus, $[v_\veps]_\mu|_{\Gamma_b} = 0 $ and $v_\veps \in \V$. 
\end{proof}

\section{The method and its error estimates}
\label{JGsc:estimates}
In this section, we present the approximation of the previously
described broken weak formulation by the (ideal) DPG method and
provide {\it a priori} and {\it a posteriori} error estimates.

\subsection{The  DPG method}

The ideal DPG method~\cite{JG:Jay2012} seeks $u_h$ and \blue{$\lambda_h$} in
finite-dimensional subspaces $U_h \subset L^2(\om)^{d+1}$ and
$Q_h \subset Q$, respectively, satisfying
\begin{align}
  b( (u_h,\lambda_h), w_h) = F(w_h), \qquad \text{ for all }  w_h \in T(U_h\times Q_h)\blue{,} \label{JG:ideal}
\end{align}
where $T: L^2(\om)^{d+1}\times Q \to W_h$ is such that
$(T(v,\rho),\blue{w } )_{W_h}= b((v,\rho),w)$ for all $ w \in W_h$ and any
$ (v,\rho) \in L^2(\om)^{d+1}\times Q$.  Hereon we denote $U$ to be
$L^2(\om)^{d+1}$ and abbreviate the $W_h$ inner product
$(\cdot,\cdot)_{W_h}$ to simply $(\cdot,\cdot)_h$.

It is well known \cite{JG:DemkoGopal13} that there is a mixed method
that is equivalent to the above Petrov-Galerkin
method~\eqref{JG:ideal}.  One of the variables in this mixed method is
the error representation function $\varepsilon_h \in W_h$ defined by
 \begin{align}
\label{JGeq:eps}
   (\varepsilon_h, w)_h = (f,w) - b((u_h, \lambda_h), w ), \qquad \text{ for all } 
   w \in W_h.
\end{align}
One of the two equations in the mixed formulation given below is a
restatement of this defining equation for $\veps_h$. The mixed
formulation seeks $\varepsilon_h \in W_h$ and
$( u_h, \lambda_h)\in (U_h\times Q_h) $ such that
\begin{align}
\begin{array}{lll}
(\varepsilon_h,w)_h \; +& b((u_h,\lambda_h),w) = F(w) & 
\qquad \text{ for all } w \in W_h,\\
& b((v,\rho),\varepsilon_h)  =  0 & \qquad \text{ for all } (v,\rho) \in U_h\times Q_h.    
\end{array}    \label{JGmixed}
\end{align}
We think of 
\[
\eta = \| \veps_h\|_{W_h} \equiv\left( \sum_{K \in \om_h} \| \veps_h \|_{W(K)}^2\right)^{1/2}
\]
as an {\it a posteriori} error estimator because $\veps_h$ can be
computed from~\eqref{JGeq:eps} after $u_h$ and $\lambda_h$ has been
computed. Alternately, one can view $\veps_h$ as one of the unknowns
together with $u_h$ and $\lambda_h$ as in~\eqref{JGmixed}. 
\blue{Note
that~\eqref{JGeq:eps} implies 
\[
\eta = \sup_{w \in W_h} \frac{ b( (u - u_h, \lambda - \lambda_h), w)}{
  \| w \|_{W_h}},
\]
so it immediately follows that the estimator is globally reliable and 
efficient, namely 
\begin{align*}
\beta  \| (u-u_h, {\lambda}  - \lambda_h)\|_{U\times Q}
  \leq \eta  \leq \|b \|\, \| (u-u_h,{\lambda} - \lambda_h) \|_{U\times Q} 
\end{align*}
where $\beta$ is as in~\eqref{JGeq:beta}. In practice, we use
element-wise norms of $\veps_h$ as {\it a posteriori} element error
indicator.  }


To give an {\it a priori} error estimate with convergence rates, we
need to specify all the approximation subspaces.  We choose the space
$Q_h \subset Q$ by first selecting a finite element space
$V_h \subset V$ and then applying $D_h$ to all functions in it, namely
\[
Q_h = D_h  V_h.
\]
This way we guarantee that $Q_h$ is a subspace of $Q$.  The definition
of $V_h$ and the finite element subspaces of $U$ are based on the type
of elements in $\om_h$. We consider two cases:
\begin{description}
\item [Case~A] $\om_h$ is a geometrically conforming mesh of
  $(d+1)$-simplices:
  \begin{subequations}
    \label{JG:trialsimplex}
    \begin{align}
      V_h &=\{ u \in V\cap C(\bar\om)^{d+1}: u|_{K} \in P_{p+1}(K)^{d+1} \text{ for all } K \in \om_h\}  \\
      U_h &= \{ u \in L^2(\om)^{d+1} : u|_{K} \in P_{p}(K)^{d+1} \text{ for all }  K \in \om_h\}, 
    \end{align}
  \end{subequations}
where $P_p(K)$ is the  space of  polynomials of total degree  $\leq p$ on
$K$.

\item[Case~B] $\om_h$ is a geometrically conforming mesh of
  hyperrectangles.
  \begin{subequations}
    \label{JG:trialrectan}
    \begin{align}
      V_h &=\{ u \in V\cap C(\bar\om)^{d+1}: u|_{K} \in Q_{p+1}(K) ^{d+1}\text{ \blue{ for all }} K \in \om_h \} \\
      U_h &= \{ u \in L^2(\om)^{d+1} : u|_{K} \in Q_{p}(K)^{d+1} \blue{\text{ for all }  K \in \om_h}\},
    \end{align}
  \end{subequations}
  where $Q_p(K)$ is the space of polynomials on $K$ that are of degree
  at most~$p$ in each variable.
\end{description}

Since the wave operator $A$ is a first order differential operator,
$\blue{H^1(\om)^{d+1}} \subset W(\om)$. Hence, the Lagrange finite element space
$V_h$ is contained in $W$. The space $V_h$ has a nodal interpolation
operator $I_h: H^{s+1}(\om)\blue{^{d+1}} \to V_h$ which is bounded for
$s+1 > (d+1)/2$, which we shall use in the proof below.  We will use
 \blue{$C$} to denote a generic mesh-independent constant whose value at
different occurrences may differ. Note that in the estimate of the
theorem below, $h$ is the discretization parameter in both space and
time.

\begin{theorem} \label{JGthm:ee} Suppose
  $u \in V\cap H^{s+1}(\om)^{d+1}$ and $\lambda= D_h u $
  solve~\eqref{JGproblem}. Suppose also that $U_h$ and $V_h$ are set
  as in \eqref{JG:trialsimplex} or \eqref{JG:trialrectan} depending on
  the mesh type, and $Q_h = D_hV_h$. Then\blue{, there exists a constant $C>0$ independent 
  of $h$ such that the discrete solution $u_h\in U_h$ and $\lambda_h \in Q_h$ solving~\eqref{JG:ideal} satisfies} 
  \begin{align}
    \| u - u_h \| + \|\lambda - \lambda_h\|_Q \leq \blue{C} h^s | u|_{H^{s+1} (\om)\blue{^{d+1}}}   
    \label{JG:estimate}
\end{align}
for $(d-1)/2 <s \leq p+1$.
\end{theorem}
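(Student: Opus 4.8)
The plan is to follow the standard quasi-optimality route for ideal DPG methods, reducing the error bound to a best-approximation estimate, where the only nonstandard ingredient is the treatment of the interfacial variable $\lambda$ through the quotient norm on $Q$. First I would invoke the general theory of the ideal DPG method. Since Theorem~\ref{JGthm:wellposedness} guarantees the inf-sup condition~\eqref{JGeq:beta} with $\beta>0$, the ideal method~\eqref{JG:ideal} (equivalently, the mixed system~\eqref{JGmixed}) computes the best approximation in the energy norm $\|(v,\rho)\|_E := \sup_{0\ne w\in W_h} b((v,\rho),w)/\|w\|_{W_h}$. Combining this with the norm equivalence $\beta\,\|\cdot\|_{U\times Q}\le \|\cdot\|_E\le \|b\|\,\|\cdot\|_{U\times Q}$ yields the quasi-optimality bound
\[
\|(u-u_h,\lambda-\lambda_h)\|_{U\times Q}
\le \frac{\|b\|}{\beta}\,
\inf_{(v_h,\rho_h)\in U_h\times Q_h}
\|(u-v_h,\lambda-\rho_h)\|_{U\times Q}.
\]
It therefore remains to bound the best-approximation error, which splits as an infimum over $U_h$ of $\|u-v_h\|$ plus an infimum over $Q_h$ of $\|\lambda-\rho_h\|_Q$.

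Next I would treat the two pieces separately. For the $U_h$ piece, since $U_h$ contains all piecewise polynomials of degree $p$, standard $L^2$ approximation theory gives $\inf_{v_h\in U_h}\|u-v_h\|\le C\,h^{\min(s+1,p+1)}|u|_{H^{s+1}(\om)^{d+1}}\le C\,h^s|u|_{H^{s+1}(\om)^{d+1}}$ for $s\le p+1$, using $u\in H^{s+1}(\om)^{d+1}$. For the $Q_h$ piece, the key observation is that $\lambda=D_h u$ with $u\in V$ and $Q_h=D_hV_h$ with $V_h\subset V$, so for any $v_h\in V_h$ the difference $u-v_h$ lies in $V$ and $D_{h,V}(u-v_h)=\lambda-D_h v_h$. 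By the very definition of the quotient norm~\eqref{JGQnorm}, $u-v_h$ is admissible in the infimum defining $\|\lambda-D_h v_h\|_Q$, whence $\|\lambda-D_h v_h\|_Q\le\|u-v_h\|_W$. Because $A$ is a first-order differential operator, $\|u-v_h\|_W\le C\,\|u-v_h\|_{H^1(\om)^{d+1}}$; choosing $v_h=I_h u$ (the nodal interpolant into the degree-$(p+1)$ space $V_h$, which is well-defined and bounded precisely when $s+1>(d+1)/2$, i.e.\ $s>(d-1)/2$) and invoking the standard interpolation error estimate then gives $\|u-I_h u\|_{H^1(\om)^{d+1}}\le C\,h^{\min(s,p+1)}|u|_{H^{s+1}(\om)^{d+1}}=C\,h^s|u|_{H^{s+1}(\om)^{d+1}}$ for $s\le p+1$.

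Finally I would combine the two estimates. The $Q_h$ piece yields the rate $h^s$, while the $U_h$ piece yields the faster rate $h^{\min(s+1,p+1)}$; both are bounded by $C\,h^s|u|_{H^{s+1}(\om)^{d+1}}$ under the stated hypothesis $(d-1)/2<s\le p+1$, and substituting into the quasi-optimality bound produces~\eqref{JG:estimate}. The main obstacle is the middle step: correctly exploiting the quotient-norm structure of $Q$ together with the special choice $Q_h=D_hV_h$ to convert the abstract $\|\cdot\|_Q$-approximation of $\lambda$ into a concrete $W$-norm (hence $H^1$) approximation of $u$ by $V_h$. Once this reduction is made, the remaining estimates are routine finite element approximation theory, and the lower threshold $s>(d-1)/2$ is exactly what is needed for the nodal interpolation operator $I_h$ to be bounded on $H^{s+1}(\om)^{d+1}$.
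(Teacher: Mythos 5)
Your proposal is correct and follows essentially the same route as the paper's proof: quasi-optimality of the ideal DPG method (the paper cites it directly from the DPG literature, while you re-derive it from the inf-sup condition~\eqref{JGeq:beta}), a split of the best-approximation error into the $U_h$ and $Q_h$ pieces, and the key step of bounding $\|\lambda - D_h I_h u\|_Q \le \|u - I_h u\|_W \le C\|u-I_h u\|_{H^1(\om)^{d+1}}$ via the quotient-norm definition~\eqref{JGQnorm} and the nodal interpolant, which is exactly the paper's choice $\rho_h = D_h I_h u$. The only cosmetic difference is in the $U_h$ term, where you keep the sharper rate $h^{\min(s+1,p+1)}$ before discarding it; the conclusion and the threshold $s>(d-1)/2$ coincide with the paper's argument.
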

\begin{proof}
  The ideal DPG method is quasioptimal, i.e., by \cite[Theorem
  2.2]{JG:Jay2012},
\begin{align*}
\| (u,\lambda) -(u_h, \lambda_h) \|_{U\times Q}^2 &\leq \blue{C} \inf_{(v_h, \rho_h) \in U_h \times Q_h} \|(u,\lambda) - (v_h, \rho_h)\|^2_{U\times Q} \\
&\leq \blue{C}\inf_{(v_h, \rho_h) \in U_h \times Q_h} 
\|u - v_h \|^2 + \|\lambda - \rho_h\|^2_Q.
\end{align*}
The well-known  best approximation estimates for $U_h$ imply
\begin{align}
  \inf_{v_h \in U_h } \| u - v_h\|\leq \blue{C}h^s | u |_{H^s(\om)^{d+1}} , \text{ for all } 0< s \leq p+1.
 \label{JG:erroru}
\end{align} 
To estimate the remaining term, choose $\rho_h = D_h I_h u$. Then, since
$\lambda = D_hu$, by the definition of the $Q$-norm
in~\eqref{JGQnorm} and the Bramble-Hilbert lemma, 
\begin{align}
\nonumber 
\inf_{\rho_h \in Q_h} \|\lambda - \rho_h \|_Q 
&
\le \|u - I_h u \|_W 
  \\ 
  & \label{JG:error2}
    \leq \blue{C} \| u - I_h u \|_{H^1(\om)^{d+1}}  \leq \blue{C}h^s |u|_{H^{s+1}(\om)^{d+1}}
\end{align}
for any $u \in H^{s+1}(\om)$, for $ (d-1)/2 < s \leq p+1$.  Thus, from
\eqref{JG:erroru} and \eqref{JG:error2}, we have that
\eqref{JG:estimate} holds.
\end{proof}

\section{Implementation and numerical results}\label{JGsc:numerical}

We implemented the DPG discretization in the form~\eqref{JGmixed}
with the following change. Since $W_h$ is infinite-dimensional, in
order to get a practical method, we must replace $W_h$ by a
sufficiently rich finite-dimensional space $Y_h^{\blue{m}}$. A full theoretical
analysis of this practical realization of the ideal DPG method is
currently open, but we will provide numerical studies showing its
efficacy in this section. For some \blue{non-negative} integer $\blue{m}$, set $Y_h^{\blue{m}}$ as
follows.
\begin{itemize}
\item In Case~A (see~\eqref{JG:trialsimplex}) we set 
  $Y_h^{\blue{m}}= \{ w \in W_h(\om) : w|_{K} \in P_{\blue{m}}(K)\blue{^{d+1}}\},$
\item In Case~B (see~\eqref{JG:trialrectan}) we set
  $Y_h^{\blue{m}} = \{ w \in W_h(\om) : w|_{K} \in Q_{\blue{m}}(K)\blue{^{d+1}}\}$.
\end{itemize}
Then, we compute $e_h \in Y_h^{\blue{m}}$, $u_h \in U_h$ and $\lambda_h 
\in Q_h$ satisfying 
\begin{align}
\begin{array}{lll}
(e_h,w)_h \; +& b((u_h,\lambda_h),w) = F(w) & \qquad \text{ for all } w \in
Y_h^{\blue{m}},\\
& b((v,\rho),e_h)  =  0 & \qquad \text{ for all } (v,\rho) \in U_h\times Q_h.    
\end{array}    \label{JGmixed-h}
\end{align}
In our numerical experience, the choice ${\blue{m}} = p+d+1$ gave optimal
convergence rates (as reported in detail below). This choice is
motivated by the study in~\cite{JG:Jay14}.  The  choice ${\blue{m}}=p+d$ did not
give optimal convergence rates for $p>2$ and $d=1$. A brief report of
the performance of an adaptive algorithm is also included in the $d=1$
case. Here again, we observed marked deterioration of adaptivity if
${\blue{m}}=p+d$ is used instead of ${\blue{m}}=p+d+1$ for higher degrees.
 Beyond these comments, we shall not describe these negative results further, but
will henceforth focus solely on the ${\blue{m}}=p+d+1$ case. All the numerical
results have been implemented using the NGSolve \cite{JG:NGSolve}
finite element software and the codes used for the experiments below
are available in \cite{JG:Gopalother15}.

\subsection{A null space} 

In order to implement~\eqref{JGmixed-h}, one strategy is to set
$\lambda_h = D_h z_h$ for some $z_h \in V_h$ and solve 
\begin{align}
\begin{array}{lll}
(e_h,w)_h \; +& b((u_h,D_h z_h),w) = F(w) & \qquad \text{ for all } w \in
Y_h^{\blue{m}},\\
& b((v,D_h r ),e_h)  =  0 & \qquad \text{ for all } v \in U_h, r \in V_h.
\end{array}    \label{JGmixed-Dh}
\end{align}
We can decompose $V_h$ into interior ``bubbles'' in
$V_h^0 = \{ z \in V_h: z|_{\partial K}=0 $ for all $K \in \om_h \}$,
and a remainder $V_h^1 \equiv V_h / V_h^0$. Since
$b( (v, D_h V_h^0), w) = 0$, we may replace $V_h$ by $V_h^1$
in~\eqref{JGmixed-Dh} (and compute a $z_h \in V_h^1$).  Let
$\{ y_k\}, \{u_i\},$ and $\{z_j\}$ denote a local finite element basis
for $Y_h^{\blue{m}}$, $U_h$ and $V_h^1$, respectively. Using this basis, the
system~\eqref{JGmixed-Dh} with $V_h$ replaced by $V_h^1$, yields a
matrix equation of the following form
\begin{align}
\label{JGeq:mat}
\bb{ \texttt{A} & \texttt{B}\\ 
\blue{\tt{B^T}} &\texttt{0} } \bb{ \texttt{e} \\ 
\texttt{x} } = \bb{\texttt{f} \\ \texttt{0} },
\end{align}
where $\tt e$ and $\tt x$ are the vectors of coefficients in the basis
expansion of $e_h \in Y_h^{\blue{m}}$ and $(u_h, z_h) \in U_h \times V_h$,
respectively, $\mathtt{A}_{kl} = (y_l, y_k)_h$,
$[\mathtt{B_0}]_{ki} = b((u_i, 0), y_k)$,
$[\mathtt{B_1}]_{kj} = b((0, D_hz_j), y_k)$, and
$\mathtt{B} = [\mathtt{B_0}, \mathtt{B_1}]$. In all our numerical
experiments, for the above-mentioned choice of ${\blue{m}}=p+d+1$, we observed
that the matrices $\mathtt{A}$ and $\mathtt{B_0}$ have trivial null
spaces.

However, we caution that ${\tt{B_1}}$ may have a null space. This runs
contrary to our experience with DPG methods on non-spacetime problems,
so we expand on it.  Note that (cf.~\eqref{JGbdryop})
\[
[\mathtt{B_1}]_{kj} = b((0, D_hz_j), y_k)
= \sum_{K \in \om_h} \int_{\partial K} {\tt{D_{x,t}}} z_j \cdot y_k
\]
where 
\begin{align*}
\tt{ D_{x,t}= \bb{n_t \blue{\tt{I}_d}& -\blue{c}n_x\\ -\blue{c}n_x^T & n_t  }}
\end{align*}
\blue{and $ \tt{I}_d$ is the $ d \times d $ identity matrix.}
It is immediate that on mesh facets with certain combinations of $n_x$
and $n_t$, the matrix ${\tt{D_{x,t}}}$ is singular.  Then ${\tt{B_1}}$
will  have a nontrivial kernel.

As an example, in Figure~\ref{JG:shapekernel}, we show one of the
$z_j$ that is in the null space of $\tt{B_1}$ on a triangular mesh for
$p=1$ \blue{and $c=1$}. In fact, on the mesh shown, there are 8 basis functions of
$V_h^1$ that are in the null space of $\tt{B_1}$, two for each
diagonal edge.  Recall that the wave speed is $1$ for our model wave
problem, so these edges align with the light cone for $d=1$. In the
case of $d=2$ space dimensions, we continued to find a nontrivial null
space for $\tt{B_1}$ on analogous meshes.
 
\begin{figure}[ht]
\centering
\includegraphics[width=4cm,trim=5.3cm 2cm 5.3cm 2cm, clip =true]{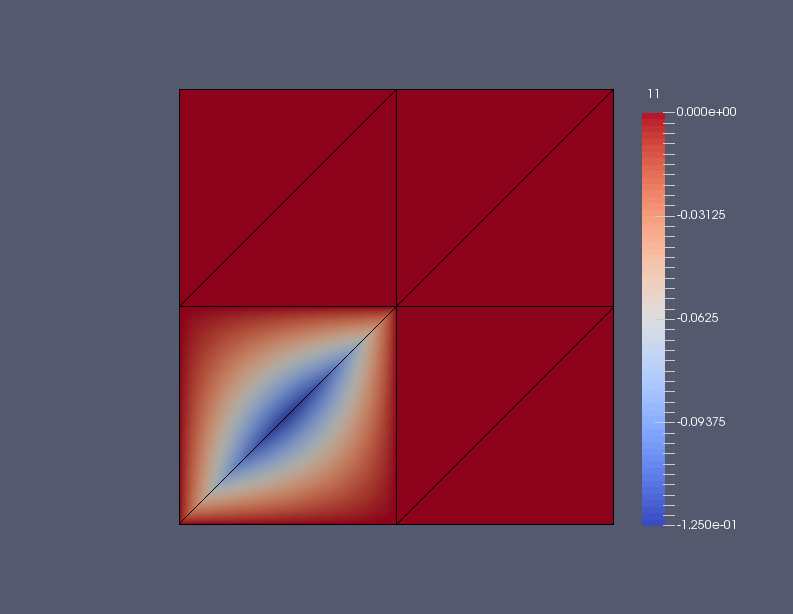}
\caption{Example of a spacetime shape function $z_j$ in  the kernel} \label{JG:shapekernel}
\end{figure}

This null space problem occurs because the interface variable
$\lambda_h$ is set indirectly by applying the singular operator $D_h$
on $V_h$. If one could directly construct a basis for $Q_h = D_h V_h$,
then one can directly implement~\eqref{JGmixed-h} (instead
of~\eqref{JGmixed-Dh}). However, we do not know how to construct such
a basis easily on general simplicial meshes. Hence we continue on to
describe how to solve~\eqref{JGmixed-Dh} despite its kernel.

\subsection{Techniques to solve despite the null space}

Despite the above-mentioned problem, one may solve the DPG system
using one of the following approaches.

\subsubsection{Technique~1: Remaining orthogonal to null space in
  conjugate gradients}

The matrix system~\eqref{JGeq:mat} can be solved by reducing it to its
Schur complement
\begin{align} \label{JGeq:Schur}
\tt
\blue{B^T} A^{-1} B x = B^T A^{-1} f
 \end{align}
first. Let $\tt{C} =\tt{\blue{B^T} A^{-1} B}$ and ${\tt{g = B^T A^{-1} f}}$.
The matrix ${\tt{C}}$ is symmetric and positive semi-definite. Its easy to see that 
\[
\ker {\tt{C}} = \ker{\tt{B}}.
\]
Thus solutions of~\eqref{JGeq:Schur} are defined only up to this
kernel. Note however that since $\ker{\tt{B}} = \ker\tt{B_1}$ and
${\tt{B_0}}$ has only the trivial kernel, the $U_h$-component of the DPG
solution is uniquely defined independently of $ \ker\tt{B_1}$.

One may obtain one solution of~\eqref{JGeq:Schur} using the conjugate
gradient method, which computes its $n$th iterate ${\tt{x}}_n$ in the
Krylov space
\[ 
K_n({\tt C, r_0}) = \mathrm{span}\{ {\tt C^k r^0} : \, k = 0,\dots, n-1 \}
\]
where $\tt {r_0= g - A x_0} $ is the initial residual. This iteration
will converge if $K_n({\tt C, r_0})$ remains ($\ell^2$) orthogonal to
$\ker({\tt{C}})$ for all $n$. A simple prescription to guarantee this
orthogonality is to choose the initial iterate $\tt{x_0=0}.$ Indeed,
if $\tt x_0 =0$, then $\tt {r_0} =$ $\tt g$ $= \tt{B^T A^{-1} f}$ is in the range
of $\tt{B^T}$ which equals the orthogonal complement of
$\ker \tt B = \ker \tt C$. Then for all $n \ge 1$, its obvious that
${\tt{C}}^n {\tt{r_0}}$ is also orthogonal to $\ker \tt C$. Thus
$K_n({\tt C, r_0})$ is orthogonal to $\ker \tt C$.

To summarize this technique, we use the conjugate gradient algorithm
to compute one solution orthogonal to $\ker (\tt C)$ and extract the
unique $U_h$-component from that solution for reporting the errors.

\subsubsection{Technique~2: Regularization of the linear system}

Another technique to solve the singular system \eqref{JGeq:Schur}
approximately is regularization. First, we rewrite \eqref{JGeq:Schur} in block form 
as
\[
  \begin{bmatrix}
  \tt{B_0^T A^{-1} B_0} & \tt{B_0^T A^{-1} B_1 }\\
  \tt{B_1^T A^{-1} B_0} & \tt{B_1^T A^{-1} B_1}
  \end{bmatrix}
  \tt{x = g}.
\]
Since only $\tt{B_1}$ may have a nontrivial kernel in $V_h^1$, we can
convert this to an invertible system by adding a small
positive-definite term in $V_h^1$. Namely, let $\tt M$ be the mass
matrix ${\tt{M}}_{jl} = (z_l, z_j)$.  Instead of solving
\eqref{JGeq:Schur}, we solve for
\begin{equation}
  \label{JGeq:reg}
  \begin{bmatrix}
  \tt{B_0^T A^{-1} B_0} & \tt{B_0^T A^{-1} B_1 }\\
  \tt{B_1^T A^{-1} B_0} & \tt{B_1^T A^{-1} B_1 + \alpha M}
  \end{bmatrix}
  \tt{x = g}.
\end{equation}
where $\tt \alpha$ is a positive regularization parameter, usually set
much smaller than the order of the expected discretization errors. In
all our reported experiments  it was set to $10^{-9}$.
The regularized system~\eqref{JGeq:reg} is invertible and can be
solved using any direct or iterative methods.


\subsection{Convergence rates in two-dimensional spacetime}

Let $\om =(0,1)^2$. We consider a problem with homogeneous boundary
and initial conditions where the exact solution to the second order
wave equation is given by $\phi(x,t) = \sin(\pi x)\sin^2(\pi t)$.
Then, the exact solution for the first order system is 
\[
u = 
\begin{bmatrix}
 \blue{c} \pi \cos(\pi x) \sin^2(\pi t)
  \\
\pi \sin(\pi x) \sin(2\pi t)
\end{bmatrix}
\]
and the corresponding source terms are
\[
g = 0, 
\qquad 
f = \pi^2\sin(\pi x)(2\cos(2\pi t)+\blue{c^2}\sin^2(\pi t)).
\]
In each experiment, a (non-uniform) coarse triangular mesh of $\om$
was constructed, with element diameters not exceeding a reported mesh
size~$h$ \blue{ and consider $c=1$}.  Successive refinements of the mesh were obtained by
connecting the mid points of the edges.  

We observe in Table~\ref{JGtable:triang} that the order of convergence
for $u_h$ in the $L^2$ norm is $O(h^{p+1})$ in accordance with
Theorem~\ref{JGthm:ee}.  Similarly in Table~\ref{JGtable:rectang}, we
observe the same convergence rates for rectangular meshes.  All
results in both tables were obtained using Technique~1.


\begin{table}[ht]
\centering
\begin{footnotesize}
\begin{tabular}{|c|c|c||c|c||c|c||c|c|} \hline
$h$   &  $p = 0 $&  Order &  $p=1$ & Order & $ p =2$ & Order  & $p = 3$ &Order \\ \hline 
1/4   &  1.2849e+00     &-- &   1.5371e-01  &    --   & 2.0385e-02  &    --    & 1.2619e-03 & -- \\ 
 1/8  &  5.6379e-01  & 1.19& 5.6127e-02 &  1.45  & 4.7540e-03   & 2.10 &1.5370e-04 &+3.04    \\
 1/16 & 2.2067e-01   & 1.35 &1.2472e-02 &  2.17  &  5.4897e-04  & 3.11 & 7.8519e-06 & +4.29  \\  
1/32  &  1.0214e-01    & 1.11 & 3.0308e-03&  2.04 &  6.6955e-05   &  3.00  & 4.7863e-07 & +4.04   \\
\hline
\end{tabular}  
\end{footnotesize}
\caption{Convergence rates for $\|u - u_h\|$ on triangular meshes 
  using Technique~1.\label{JGtable:triang}}
\end{table}
\begin{table}
\centering
\begin{footnotesize}
\begin{tabular}{|c|c|c||c|c||c|c||c|c|} \hline
$h$ &  $p = 0 $&  Order &  $p=1$ & Order& $ p =2$ & Order & $p = 3$ &Order  \\ \hline 
1/4  & 9.7226e-01 &--   &1.6834e-01& -- &6.6722e-03& -- &  2.0910e-03 & --\\
1/8  & 4.7357e-01 &1.04 &4.2869e-02&1.97&8.5059e-04&2.97 & 1.3308e-04 & 3.97 \\
1/16 & 2.3291e-01 &1.35 &1.0763e-02&1.99&1.0707e-04&2.99 &8.3773e-06 & 3.99 \\
1/32 & 1.1587e-01 &1.11 &2.6935e-03&2.00&1.3409e-05&3.00 &5.2613e-07  &3.99 \\ 
\hline 
\end{tabular}  
\end{footnotesize}
\caption{Convergence rates for $\| u - u_h \|$ on rectangular
  meshes using Technique~1\label{JGtable:rectang}.}
\end{table}

\subsection{Adaptivity}

\begin{figure}[h]
\begin{minipage}[c]{0.7\textwidth}
\begin{subfigure}{.45\linewidth}
\centering
\includegraphics[width=4cm,trim=5cm 2cm 5.3cm 2cm, clip = true]{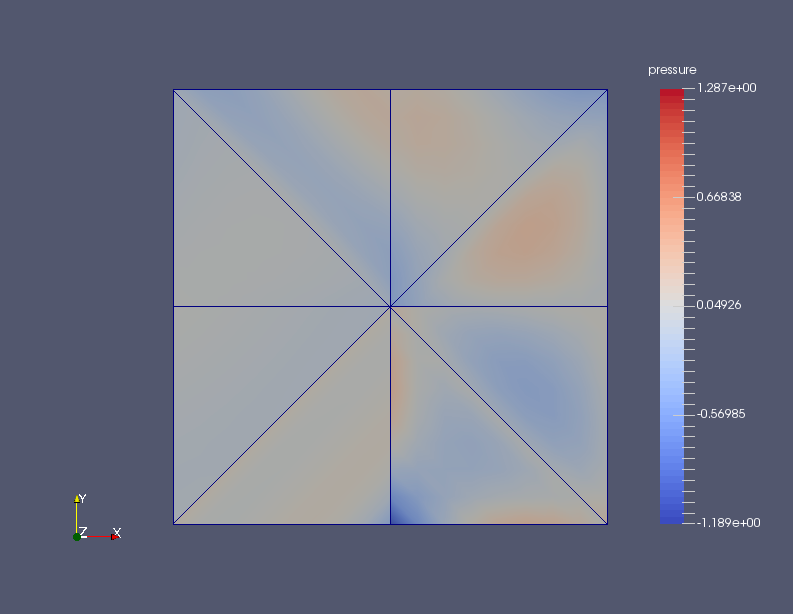}
\caption{0 refinements\quad} 
\end{subfigure}
\begin{subfigure}{.45\linewidth}
\centering
\includegraphics[width=4cm,trim=5cm 2cm 5.3cm 2cm, clip =true ]{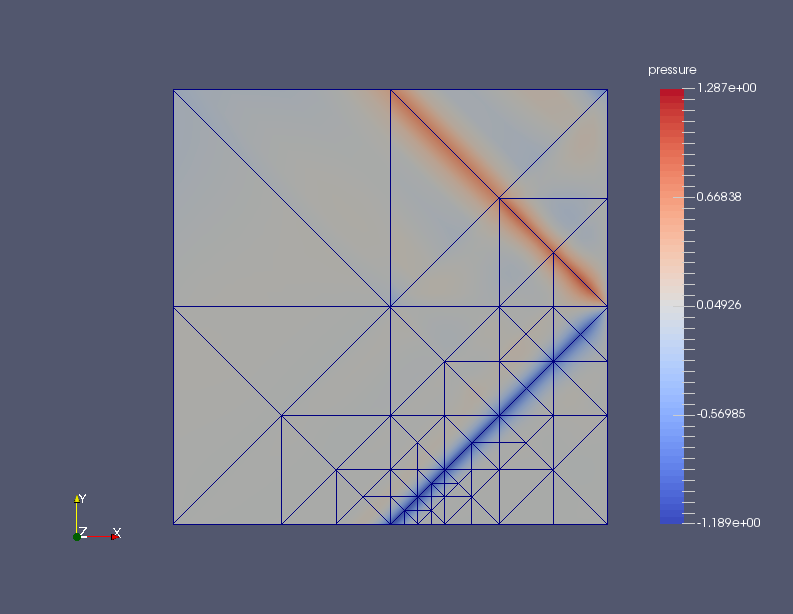}
\caption{6 refinements} 
\end{subfigure}\\
\begin{subfigure}{.45\linewidth}
\centering
\includegraphics[width=4cm,trim=5cm 2cm 5.3cm 2cm, clip =true]{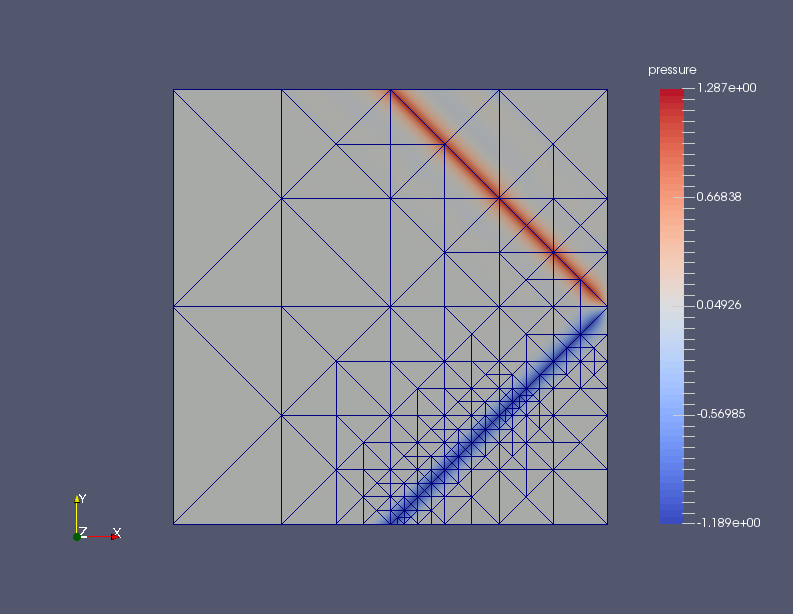}
\caption{14 refinements \qquad} 
\end{subfigure}
\begin{subfigure}{.45\linewidth}
\centering
\includegraphics[width=4cm,trim=5.3cm 2cm 5.3cm 2cm, clip =true]{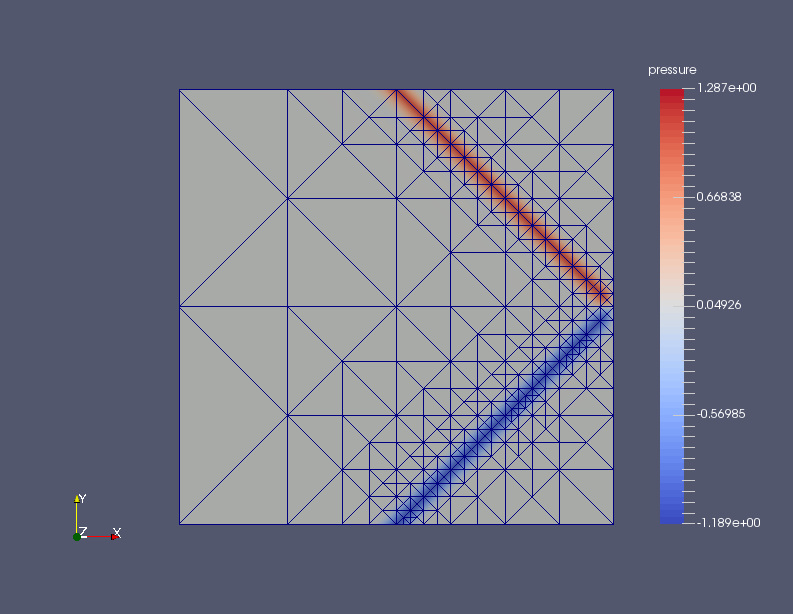}
\caption{ 22  refinements} 
\end{subfigure}
\end{minipage}
\begin{minipage}[c]{0.15\textwidth}
\begin{subfigure}{1\linewidth}
\vspace{-.55cm}
\includegraphics[width=3cm,trim=22cm 2cm 0cm 2cm, clip = true]{figs/pressure0}
\end{subfigure}\\
\end{minipage}
\caption{\blue{Iterates from the adaptive algorithm. Numerical
    pressure $\mu$ is shown for $p =3$. Time axis is vertical.}} \label{JG:adapt}
\end{figure}

Let $\om = (0,1)^2$. We consider the same model
problem~\eqref{JG:model} but now with zero sources $f=g=0$
and the non-zero initial condition 
\[
  \mu|_{t=0} =-\phi_0, 
\qquad 
q|_{t=0} = \phi_0
\]
in place of \eqref{JGmodel-bc}, where
$\phi_0 = \exp(-1000 ((x - 0.5)^2))$.  The boundary condition $\mu = 0$
continues to remain the same. This simulates a beam reflecting off the
Dirichlet boundary.

In Figure~\ref{JG:adapt}, we display a few iterates from the standard
adaptive refinement algorithm using $p =3$ and the DPG error
estimator.  We started with the extremely coarse mesh shown in
Figure~\ref{JG:adapt}(a), used the element-wise norms of $e_h$ to
compute the DPG element error indicator, marked elements with more
than 50\% of the total indicated error, refined the marked elements
(and more for conformity) by bisection, and repeated this adaptivity
loop.  The few iterates from the adaptivity loop shown in
Figure~\ref{JG:adapt} show the potential of the spacetime DPG method
to easily capture localized features in spacetime.
\subsection{ Adaptivity with inhomogeneous materials}
Consider  the case when the domain consists of two regions, namely $\om_l = (0,0.5) \times(0,1.4)$, and $\om_r = (0.5,1)\times(0,1.4)$, and a more general first order wave equation   

\[ 
\begin{bmatrix}
  	\kappa_1 & 0 \\
  	 0 & \kappa_2 
  	  \end{bmatrix} 
  \dt u -  \begin{bmatrix}
           0 &c \\
           c & 0 
           \end{bmatrix}
 \dx u = 0,
 \] 
 where 
 \[
  \kappa_1 = \left\{ \begin{matrix} 2, & 0<x<1/2 \\ 1/2, & 1/2 <x<1, \end{matrix} \right. 
    \qquad
  \kappa_2 = \left\{ \begin{matrix} 2, & 0<x<1/2 \\ 1/2, & 1/2 <x<1, \end{matrix} \right. 
\]
as in \cite{JG:Jay15}, we set $c =1$. Here, $\kappa_1, \kappa_2$ are material parameters. The wave speed is given by $c/\sqrt{\kappa_1 \kappa_2}$, and jumps between 0.5 to 2. The impedance,  given by $\kappa_1 / \kappa_2$, is the same in both regions, therefore we  expect no reflections between the regions. We set vanishing Dirichlet boundary conditions as the previous example  and 
$$ f = g =  0, \quad u_q(x,0) = e^{-5000((x-0.2)^2)}, \text{ and } u_\mu(x,0) = - e^{-5000((x-0.2)^2)}. $$
We can observe the results of adaptive algorithm in Figure \ref{JG:figs:Fig4}.



\begin{figure}
\begin{minipage}[c]{0.8\textwidth}
\begin{subfigure}{.25\linewidth}
\includegraphics[width=3.5cm,trim=0cm 0cm 4cm 1cm, clip = true]{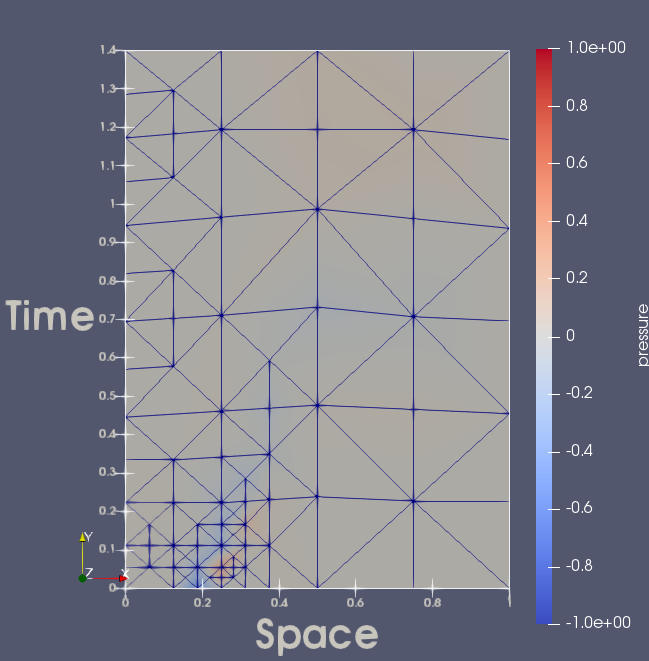}
\caption{5 refinements}
\end{subfigure} \quad
\begin{subfigure}{.25\linewidth}
\includegraphics[width=3.5cm,trim=0cm 0cm 4cm 1cm, clip = true]{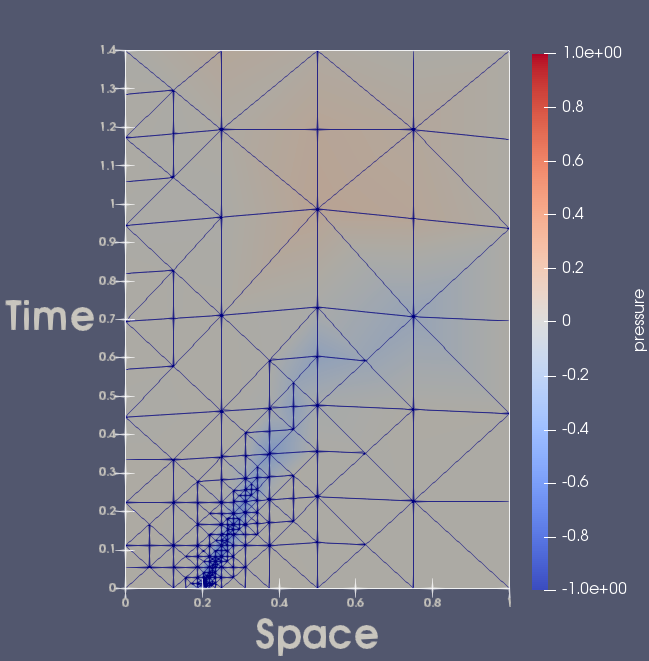}
\caption{10 refinements}
\end{subfigure} \quad
\begin{subfigure}{.25\linewidth}
\includegraphics[width=3.5cm,trim=0cm 0cm 4cm 1cm, clip = true]{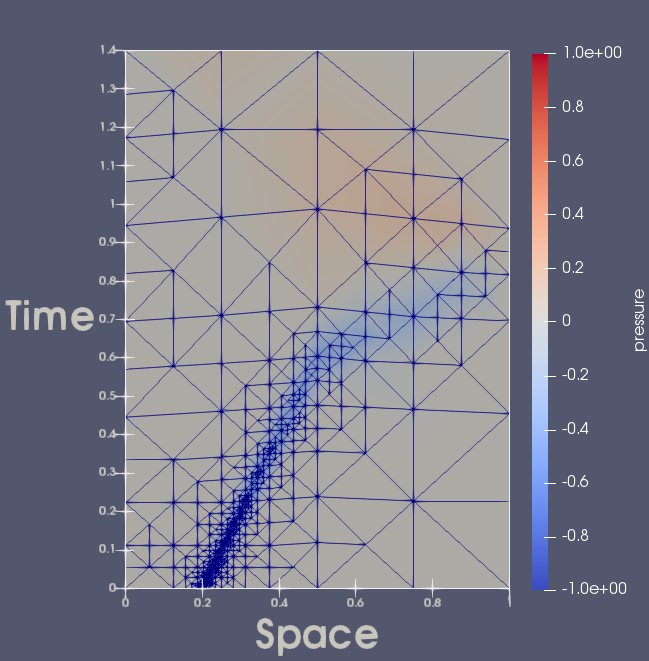}
\caption{15 refinements}
\end{subfigure} \\
\begin{subfigure}{.25\linewidth}
\includegraphics[width=3.5cm,trim=0cm 0cm 4cm 1cm, clip = true]{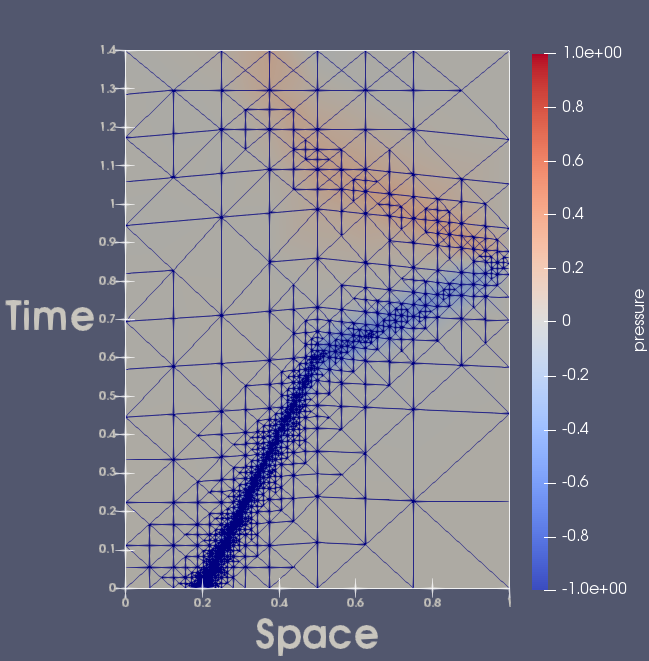}
\caption{20 refinements} 
\end{subfigure} \quad 
\begin{subfigure}{.25\linewidth}
\includegraphics[width=3.5cm,trim=0cm 0cm 4cm 1cm, clip = true]{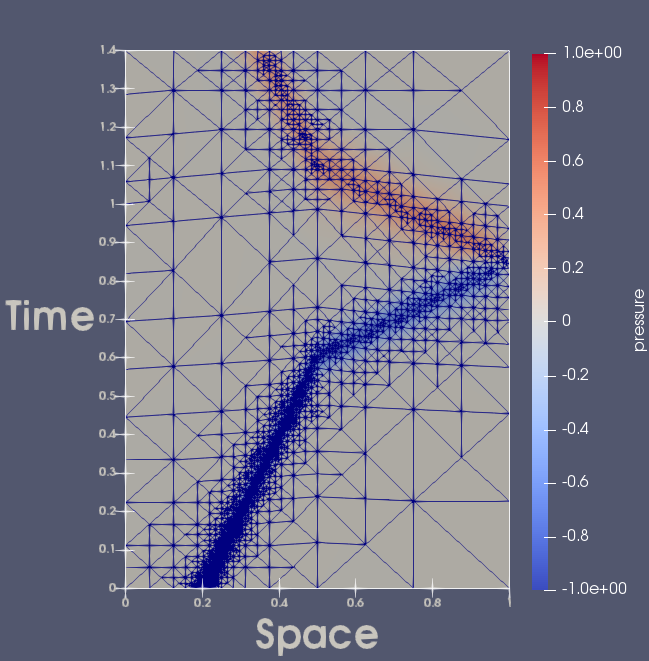}
\caption{25 refinements}
\end{subfigure} \quad
\begin{subfigure}{.25\linewidth}
\includegraphics[width=3.5cm,trim=0cm 0cm 4cm 1cm, clip = true]{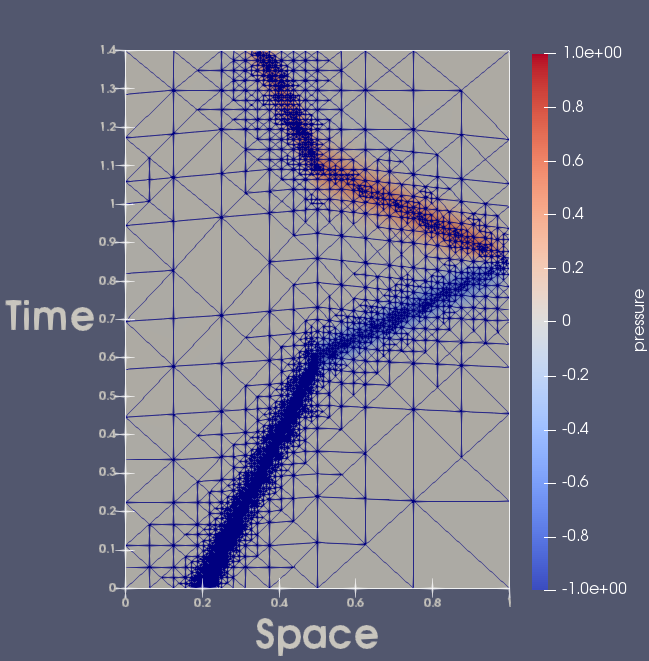}
\caption{30 refinements}
\end{subfigure} 
\end{minipage}
\begin{minipage}[tl]{0.15\textwidth}
\vspace{-1cm}
\includegraphics[width=1.8cm,trim=18cm 0cm 0cm 0cm, clip = true]{figs/adapt30}
\end{minipage}
\caption{Iterate from the adaptive algorithm. Numerical pressure $\mu$ is shown for $p =1$.} \label{JG:figs:Fig4}
\end{figure}

\subsection{Convergence rates in three-dimensional spacetime}

On $\om = (0,1)^3$, we consider the problem where the exact solution
to the second order wave equation is given by
$\phi(x,t) = \sin(\pi x)\sin(\pi y) t^2.$ This corresponds to 
\[
u = \bb{\pi \cos(\pi x) \sin(\pi y) t^2 \\ \pi \cos(\pi y) \sin(\pi  x) t^2
\\ 
2\sin(\pi x) \sin (\pi y) t},
\]
$f = \sin(\pi x) \sin(\pi y )( 2 + 2 \pi^2t^2) $ and $g=0$.

In Table~\ref{JGtable:tet}, we show the convergence rates of $u_h$ for
successively refined tetrahedral meshes, obtained using Technique~2
for $p=0,1,2,3$. 
Table~\ref{JGtable:hexa} shows analogous results obtained for
successively refined hexahedral meshes using Technique~1.  In all
these cases, we observe $O(h^{p+1})$ convergence rates for~$u_h$.

\begin{table}[hb]
\centering
\begin{footnotesize}
\begin{tabular}{|c|c|c||c|c||c|c||c|c|} \hline
$h$ &  $p=0$  &  Order & $p=1$ &  Order & $p=2$ & Order & $p=3 $&  Order\\ \hline
1   & 9.0604e-01&  -- &4.7829e-01 & -- & 1.4146e-01&  --  & 4.3952e-02 & --   \\
1/2 & 6.0557e-01&0.58 &1.3924e-01 &1.78& 1.3912e-02& 3.35 & 3.2845e-03 & 3.74 \\
1/4 & 3.3896e-01&0.84 &3.3508e-02 &2.05& 1.4769e-03& 3.24 & 1.6490e-04 & 4.32\\
1/8 & 1.5469e-01&1.13 &8.9554e-03 &1.90& 1.7210e-04& 3.10 & 9.9691e-06 & 4.05 \\ \hline
 \end{tabular}  
\caption{Convergence rates for $\|u-u_h\|$ on  tetrahedral meshes
  obtained   using Technique~2.} \label{JGtable:tet}
\end{footnotesize}
\end{table}
\begin{table}[hb]
\centering 
\begin{footnotesize}
\begin{tabular}{|c|c|c||c|c||c|c||c|c|} \hline
$h$ &$p = 0 $  &  Order &   $p=1$& Order& $ p =2$   & Order& $p=3$ & Order \\ \hline 
1   &1.1149e+00&  -- &6.0068e-01 & --   &2.8828e-02 &  --  &3.3262e-02 &   -    \\
1/2 &7.5769e-01&0.56 &1.5124e-01 &1.99  &2.8264e-03 & 3.35 &2.0540e-03 & 4.02  \\
1/4 &4.2035e-01&0.85 &3.8592e-02 &1.97  &3.5256e-04 & 3.00 &1.3234e-04 & 3.96 \\
1/8 &2.1338e-01&0.98 &9.6918e-03 &1.99  &3.8023e-05 & 3.21 &9.3766e-06 & 3.82 \\ \hline
 \end{tabular}  
\end{footnotesize}
\caption{ Convergence rates for  $\|u - u_h\|$ on  hexahedral meshes
  using Technique~1.\label{JGtable:hexa}}
\end{table}

\subsection{Adaptivity in 3D} Consider $\om = (0,1)^3$, and  the problem where the
 exact solution is given by $$u(x,y,t)= e^{-200((x-x_0-ct)^2+(y-y_0-ct)^2)}\bb{1 \\1\\ -1}$$
 Here we have chosen $x_0 = y_0 = 0.2$. This corresponds to set $f = 0$, and 
 $$g = 400c\,e^{-200((x-x_0-ct)^2+(y-y_0-ct)^2)}\bb{y -y_0 - ct \\x -x_0 - ct } .$$
 After setting $c = 1/2$ and homogeneous Dirichlet boundary conditions, we  observe that the adaptive scheme  captures with precision the behavior of the wave propagation in Figure~\ref{JG:fig:Fig5} .
 
 \blue{
\begin{figure}[h]
\includegraphics[width=4.5cm,trim=1cm 0cm 6cm 1.2cm, clip = true]{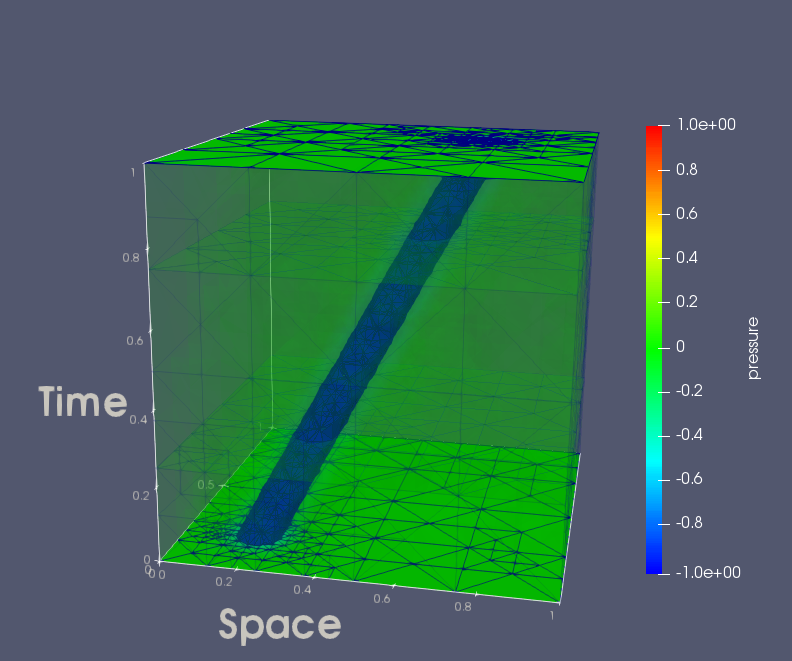}
\includegraphics[width=4.5cm,trim=1cm 0cm 6cm 1cm, clip = true]{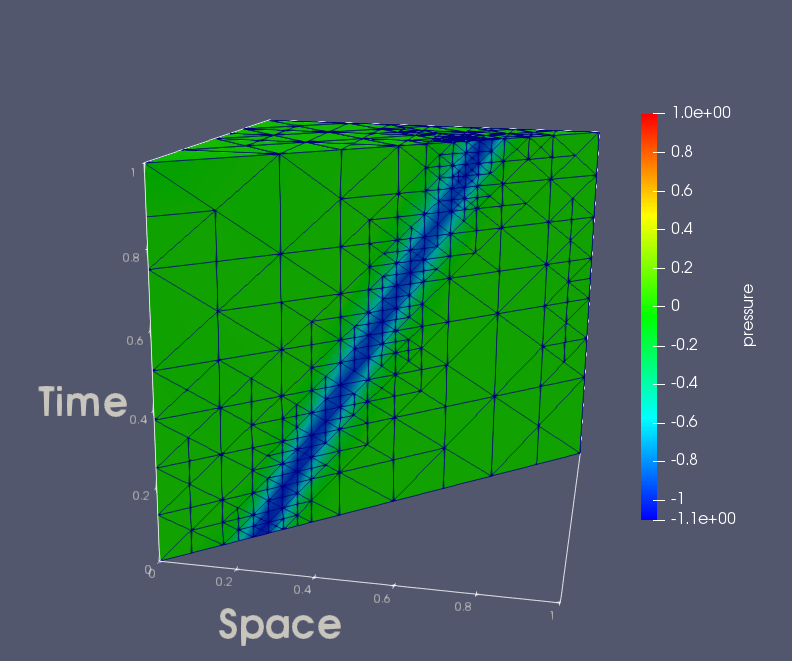}
\includegraphics[width=1.55cm,trim=22cm 2.5cm 0.3cm 3.5cm, clip = true]{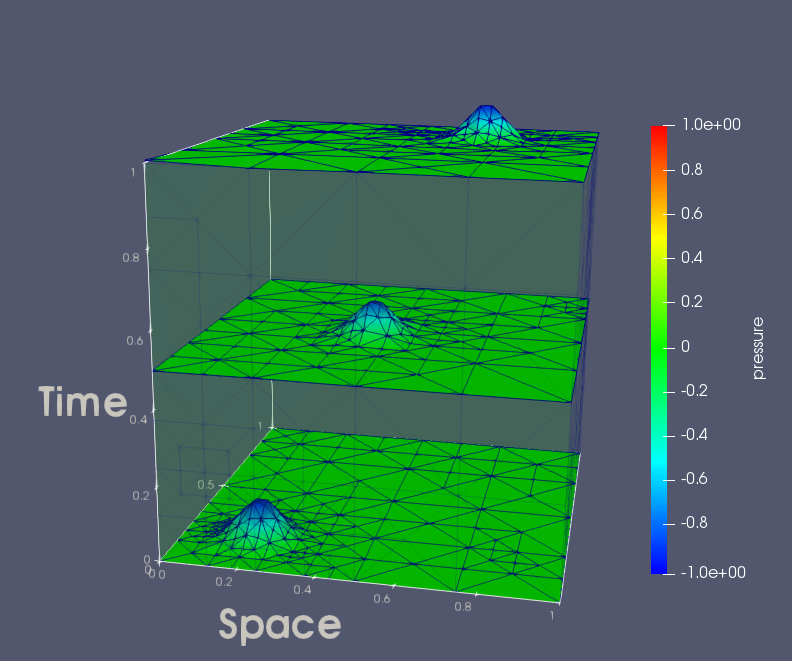}
\caption{Adaptivity example in three dimensions after 10 iterations, $u_\mu$ component is shown, and $p = 1$.  
} \label{JG:fig:Fig5}
\end{figure}
}
\subsection{Adaptivity with variant wave speed} 
 Consider $\om = (-4,4)^2\times(0,8)$ and
 the exact solution $$u(x,y,t)  = \bb{-1\\-1\\1}e^{-20((x-c\cos(\frac{\pi}{2}t))^2 +(y+c\sin(\frac{\pi}{2}t))^2)}.$$
  After setting  $c = 1$ , the component $u_\mu$  corresponds to a pulse propagating from the coordinates $(1,0,0)$ (at time $t = 0$), to $(1,0,8)$ (at time $t = 8$), rotating in time with respect to the $t$-axis with distance equals 1.  We have chosen the solution so we  observe two complete rotations from $t = 0$ to $t = 8$.  (see Figure \ref{JG:figs:Fig6}). 

Consider  the initial condition  $u (x,y,0) = \bb{-1\\-1\\1} e^{-20((x-1)^2 +y^2)}$, 
with homogeneous Dirichlet boundary conditions,
and set
\begin{align*}
g &= 40c  \bb{x (\frac{\pi}{2}\sin(\frac{\pi}{2}t)+1) +\cos(\frac{\pi}{2}t)( \frac{\pi}{2}y-c)) \\
\sin( \frac{\pi}{2}t)( \frac{\pi}{2}x+c) +y (1+ \frac{\pi}{2}\cos( \frac{\pi}{2}t)) } u_\mu (x,y,t), \\
 f&=  -40c  \left( x \left(\frac{\pi}{2}\sin\left(\frac{\pi}{2}t\right)+1\right)
                             +y \left( \frac{\pi}{2}\cos\left( \frac{\pi}{2} t \right) +1\right)
                             -\sqrt{2}c\sin\left(\frac{\pi}{2}+\frac{1}{4} \right) \right)u_\mu (x,y,t).
\end{align*}

\begin{figure}[h]
\begin{subfigure}[c]{.4\linewidth}
\includegraphics[width=5.5cm,trim=0cm 0cm 3.3cm 1cm, clip = true]{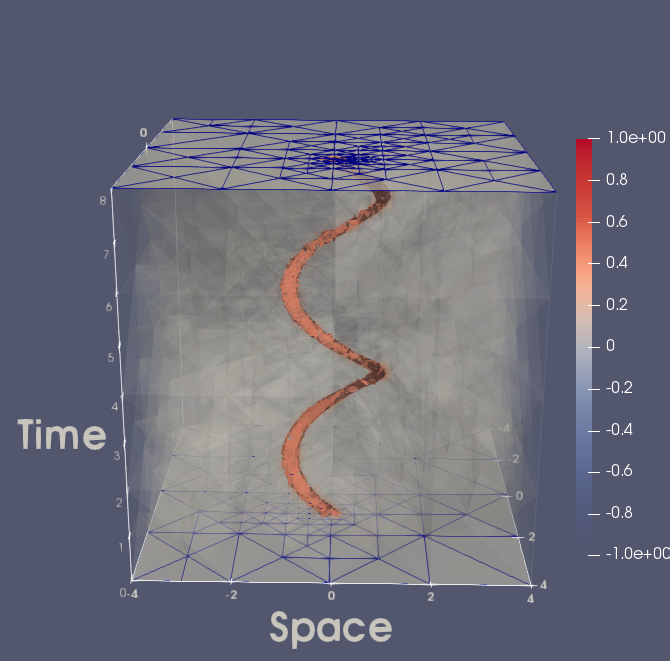}
\caption{Spacetime contour of the solution $u_\mu$.}
\end{subfigure} \quad
\begin{subfigure}{.4\linewidth}
\includegraphics[width=5.5cm,trim=0cm 0cm 3.3cm 1cm, clip = true]{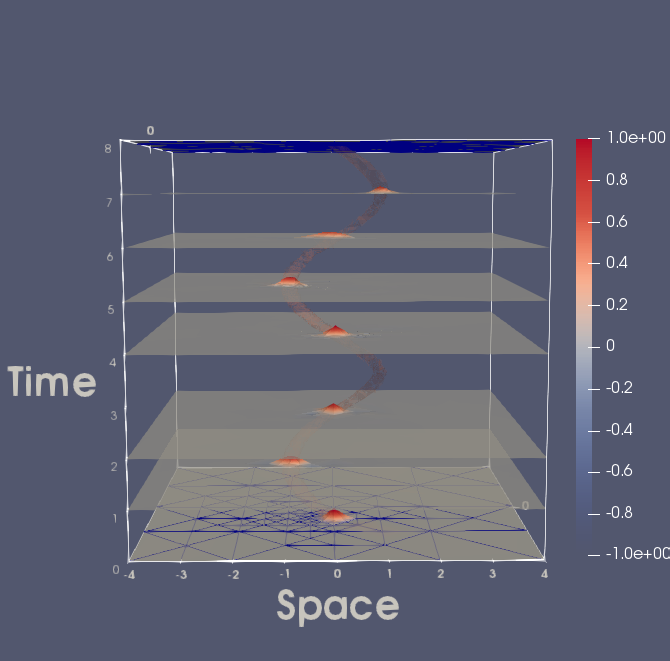}
\caption{Slides of the solution $u_\mu$ at particular time steps.}
\end{subfigure} 
\quad 
\begin{subfigure}[c]{.1\linewidth}
\vspace{-1.05cm}
\includegraphics[width=1.12cm, trim=18cm 2cm 1cm 0cm, clip = true]{figs/adapt30}
\end{subfigure}
\caption{Iterate from the adaptive algorithm. Numerical pressure $\mu$ is shown for $p =1$.} \label{JG:figs:Fig6}
\end{figure}


\begin{thebibliography}{9}

\bibitem{JG:Adams75}
{\sc R.~A. Adams}, {\em {Sobolev Spaces}}, Pure and Applied Mathematics,
  Academic Press, New York, 1975.
\blue{
\bibitem{JG:Brezis11}
  {\sc H.~Brezis,}
 {\em Functional Analysis, Sobolev spaces and Partial Differential Equations}. Universitext, Springer,
2011.}

\bibitem{JG:CarstDemkoGopal16}
{\sc C.~Carstensen, L.~Demkowicz, and J.~Gopalakrishnan}, {\em Breaking spaces
  and forms for the {DPG} method and applications including {Maxwell}
  equations}, Computers and Mathematics with Applications, 72 (2016),
  pp.~494--522.

\bibitem{JG:ErrorControl}
{\sc C.~Carstensen, L.~Demkowicz, \blue{and} J.~Gopalakrishnan}, {\em A posteriori error control for DPG Methods},
SIAM Journal on Numerical Analysis, Volume 36, Number 53, Number 3, pp. 1335-1353, (2014)

  \bibitem{JG:Jay2012}
{\sc L.~Demkowicz and J.~Gopalakrishnan}
 {\em A class of discontinuous Petrov-Galerkin methods. Part II: Optimal test functions}. Numerical Methods for Partial Differential Equations, 27:70-105, 2011.
 
\bibitem{JG:DemkoGopal13}
{\sc L.~Demkowicz and J.~Gopalakrishnan} {\em An overview of the
  discontinuous {Petrov Galerkin} method}, in Recent Developments in
  Discontinuous Galerkin Finite Element Methods for Partial Differential
  Equations: 2012 John H Barret Memorial Lectures, X.~Feng, O.~Karakashian, and
  Y.~Xing, eds., vol.~157 of The IMA Volumes in Mathematics and its
  Applications, Institute for Mathematics and its Applications, Minneapolis,
  Springer, 2013, pp.~149--180.

  \bibitem{JG:DGNS2017}
 {\sc L.~Demkowicz, J.~Gopalakrishnan, \blue{S.}~Nagaraj and P.~Sep\'ulveda},
 {\em A spacetime DPG method for the Schr\"odinger equation}. SIAM Journal on Numerical Analysis, Vol. 55, No. 4, pp. 1740-1759, 2017.
\blue{
\bibitem{JG:Wieners16}
{\sc W.~D\"orfler, S.~Findeisen and C.~Wieners}, {\em Space-time discontinuous Galerkin discretizations for linear first-order hyperbolic evolution systems}, Comput. Methods Appl. Math. 16, 409-428, 2016.}

\bibitem{JG:ECD2015}
{\sc T.~E.~Ellis, J.~Chan, and L.~F.~Demkowicz}, {\em Robust DPG methods for the transient convection-diffusion}, ICES Report, The Institute for Computational Engineering and Sciences, The University of Texas at Austin, 15-21 (2015). 

\bibitem{JG:EDCM2014}
{\sc T.~E.~Ellis, L.~F.~Demkowicz, J.~L.~Chan, and  R.~D.~Moser}, {\em Space-time DPG: Designing a method for massively parallel CFD}, ICES Report, The Institute for Computational Engineering and Sciences, The University of Texas at Austin, 14-32 (2014). 


\bibitem{JG:FHG2017}
{\sc T.~F\"uhrer, N.~Heuer, \blue{and} J.~Sen~Gupta},
{\em A time-stepping DPG scheme for the heat equation.}
Comput. Methods Appl. Math. 17 (2), 237-252, 2017.



\bibitem{JG:Jay15}
{\sc J.~Gopalakrishnan, P.~Monk, \blue{and} P.~Sep\'ulveda,} {\em A tent pitching scheme motivated by Friedrichs theory.} Computers and Mathematics with Applications, Volume 70, Issue 5, pp. 1114-1135, 2015.

{\blue{
\bibitem{JG:GopalNeumuVassi18}
{\sc J.~Gopalakrishnan, M.~Neum{\"u}ller, and P.~S. Vassilevski}, {\em The
  auxiliary space preconditioner for the {de Rham} complex}, To appear in SIAM
  J Numer Anal,  (2018).
}}

\bibitem{JG:Jay14}
{\sc J.~Gopalakrishnan \blue{and} W.~Qiu}, {\em An analysis of the practical DPG method},
Mathematics of Computation, Volume 83, No 286, pp. 537-552, 2014.

\bibitem{JG:Neumu13}
{\sc M.~Neum{\"u}ller}, {\em Space-Time Methods: Fast Solvers and
  Applications}, PhD thesis, Graz University of Technology, 2013.

\bibitem{JG:NeumuellerVassilevskiVilla2017}
{\sc M.~Neum{\"u}ller, P.~S. Vassilevski, and U.~E. Villa}, {\em Space-time
  {C}{F}{O}{S}{L}{S} methods with {A}{M}{G}e upscaling}, in Domain
  Decomposition Methods in Science and Engineering XXIII, C.-O. Lee, X.-C. Cai,
  D.~E. Keyes, H.~H. Kim, A.~Klawonn, E.-J. Park, and O.~B. Widlund, eds.,
  Cham, 2017, Springer International Publishing, pp.~253--260.


\bibitem{JG:Stein15}
{\sc O.~Steinbach}, {\em Space-time finite element methods for parabolic
  problems}, Comput. Methods Appl. Math., 15 (2015), pp.~551--566.
 \blue{\bibitem{JG:VoroninLeeNeumulerSepulvedaVassilevski18}
  {\sc K.~Voronin, C.~S. Lee, M.~Neum\"uller, P.~Sep\'ulveda, and P.~Vassilevski}, {Space-Time Discretizations Using Constrained First-Order System Least Squares (CFOSLS).}  Journal of Computational Physics, Vol 373,
 pp. 863-876, 2018.}


\bibitem{JG:NGSolve}
{\em NGSolve}, Software hosted at \url{https://ngsolve.org}.

\bibitem{JG:Gopalother15}
{\em DPG Methods  in NGSolve}, 
  \url{https://github.com/jayggg/DPG}. Software hosted on {GitHub}.

\end{thebibliography}
\end{document}